\newtheorem{pro}{Proposition}
\newtheorem{remark}{Remark}
\definecolor{darkgreen}{rgb}{0.2, 0.5, 0.0}
\title{Time multiscale modeling of sorption kinetics I: uniformly accurate schemes for highly oscillatory advection-diffusion equation}
\author[1]{Clarissa Astuto}
\author[2]{Mohammed Lemou}
\author[3]{Giovanni Russo}
\affil[1]{King Abdullah University of Science and Technology (KAUST), 4700, Thuwal, Saudi Arabia}
\affil[2]{IRMAR, CNRS et Universit\'{e} de Rennes 1, France}
\affil[3]{Department of Mathematics, University of Catania, Italy}
\begin{document}
\maketitle

\begin{abstract}
In this paper we propose a numerical method to solve a 2D advection-diffusion equation, in the highly oscillatory regime. We use an efficient and robust integrator which leads to an accurate approximation of the solution without any time step-size restriction. %[check that this is indeed the case and that there is no CFL type restriction]}. 
Uniform first and second order numerical approximations in time are obtained with errors, and at a cost, that are independent of the oscillation frequency. 
{This work is part of a long time project, and the final goal is the resolution of a Stokes-advection-diffusion system, in which the expression for the velocity in the advection term, is the solution of the Stokes equations.} 
This paper focuses on the time multiscale challenge, coming from the velocity that is an $\varepsilon-$periodic function, whose expression is explicitly known. We also introduce a two--scale formulation, as a first step to the numerical resolution of the complete oscillatory Stokes-advection-diffusion system, that is currently under investigation. This two--scale formulation is also useful to understand the asymptotic behaviour of the solution. 
\end{abstract}

	\section{Introduction}
\label{sec:introduction}
The diffusion of particles, in presence of moving traps, is an interesting topic in different areas, from biology \cite{variA,variC,variB,Lotka-Volterra}  to chemistry \cite{Benichou,pierresinserm00416601}, with a relevant application to the study of the relation between living cell membranes and substances freely diffusing around them \cite{Raudino20168574,fernandez2012mixed,SIAM1_fernandez2016existence}.

%%% ---------------------
With the aim of studying the capture rate (chemoreception), a biomimetic model has been developed \cite{corti2014out,corti2015trapping,Raudino20168574}, in which an oscillating air bubble mimics a fluctuating cell, and a flow of charged surfactants simulates the diffusing substances (see Fig.~\ref{fig_delta} (a)). Surfactants are composed by anions and cations, that have two different configurations. The cations are hydrophilic, while the negative ions  have both hydrophilic and hydrophobic parts, and, for this reason, they are absorbed at the air-water interface %. The hydrophobic part prefers to stay in the air, while the hydrophilic part stays in water 
(see Fig.~\ref{fig_delta} (a)). 
%%%% ------------

The problem of surfactant diffusion or molecules, that are adsorbed at the surface of a moving cell, has been investigated by several authors \cite{Raudino20168574,astuto2023multiscale,CiCP-31-707,SIAM2_ganesan2012arbitrary,SIAM3_morgan2015mathematical,SIAM4_xu2013analytical,WIEGEL1983283,BERG1977193} %and, in each work, the transport phenomena in ionic solutions requires the calculation of a drift-diffusion equation of the concentration of ions (see \cite{}). 
and the starting model for the evolution of a single species carriers is the one described in \cite{astuto2023multiscale}, where the authors introduced the local concentration of 
ions\footnote{In reality there are two types of ions, anions and cations, each with its own concentration, mutually attracting each other by Coulomb interaction. In the so called quasi-neutral limit the two concentrations completely overlap, and the system can be described by just one scalar concentration, see \cite{astuto2023multiscale}.} $c= c(\vec{x},t)$, whose time evolution in a static fluid is governed by the conservation law
\begin{equation}
	\displaystyle \frac{\partial c}{\partial t}=-\nabla\cdot J.
	\label{equation_flux}
\end{equation}
In presence of a static bubble (see Fig.~\ref{fig_delta} (a)), and for small concentrations $c$, the flux has the following expression
\begin{eqnarray}
	\displaystyle J=\ -D\left(\nabla c +\ \frac{1}{k_BT}c\nabla V\right)
	\label{equation_definition_flux}	
\end{eqnarray}
where $D$ is the diffusion coefficient, $k_B$ is the Boltzmann's constant, $T$ is the absolute temperature and $V$ is a suitable potential function that describes the \textit{attractive-repulsive} behavior of the bubble with the particles (see Fig.~\ref{fig_delta} (b)).

\begin{figure}
	\centering
	\begin{minipage}[b]
		{.49\textwidth}
		\centering
		\begin{overpic}[abs,width=\textwidth,unit=1mm,scale=.25]{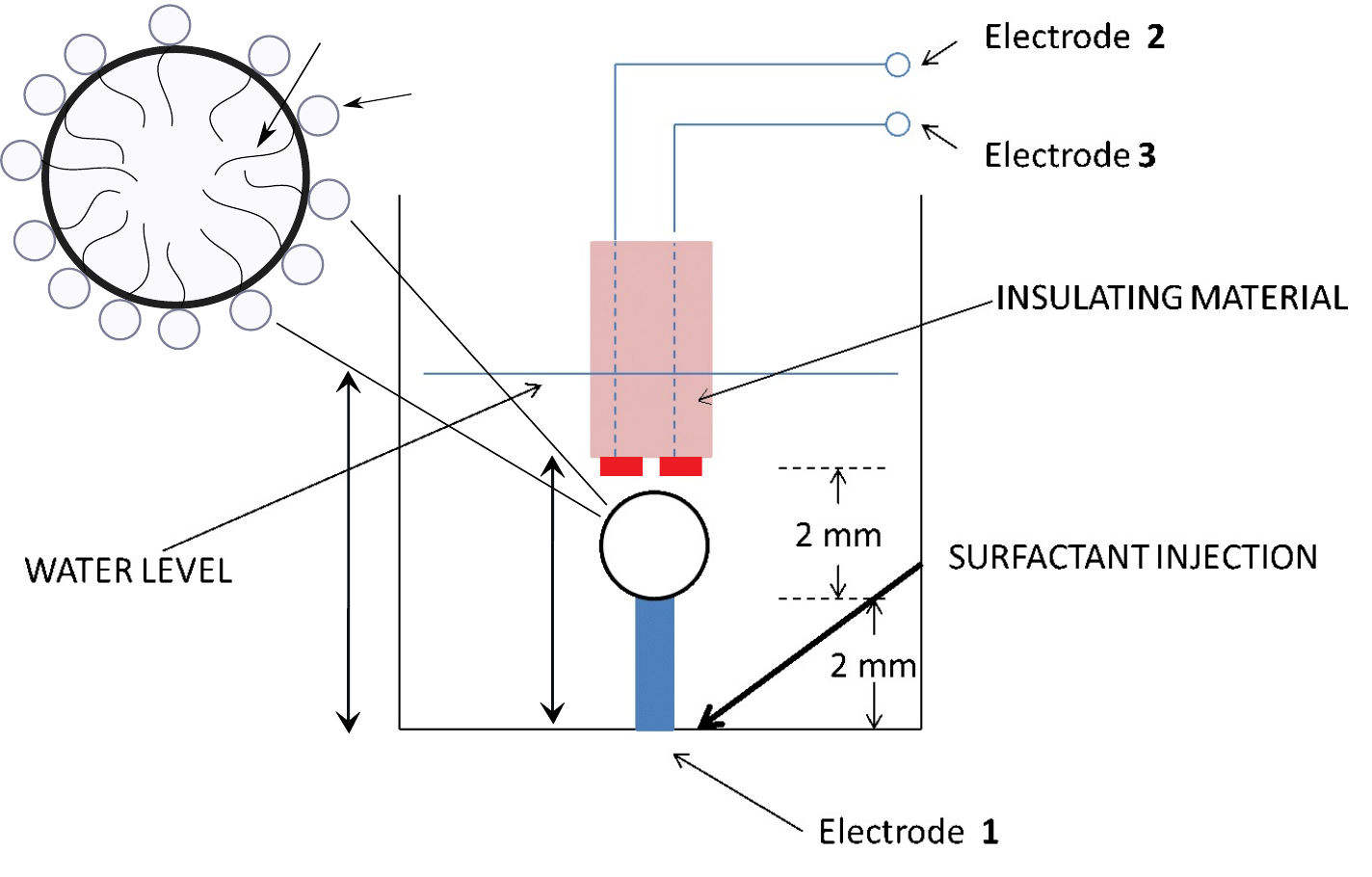}
			\put(14,8){{0}}
			\put(14,24){{H}}
			\put(26,12){{P}}
			\put(-4,45){(a)}
			\put(14,46.5){\footnotesize hydrophobic tail}
			\put(19,43){\footnotesize hydrophilic}
			\put(23,40){\footnotesize head}
		\end{overpic}
	\end{minipage}\hfill
	\begin{minipage}[b]{.49\textwidth}
		\begin{overpic}[abs,width=0.85\textwidth,unit=1mm,scale=.25]{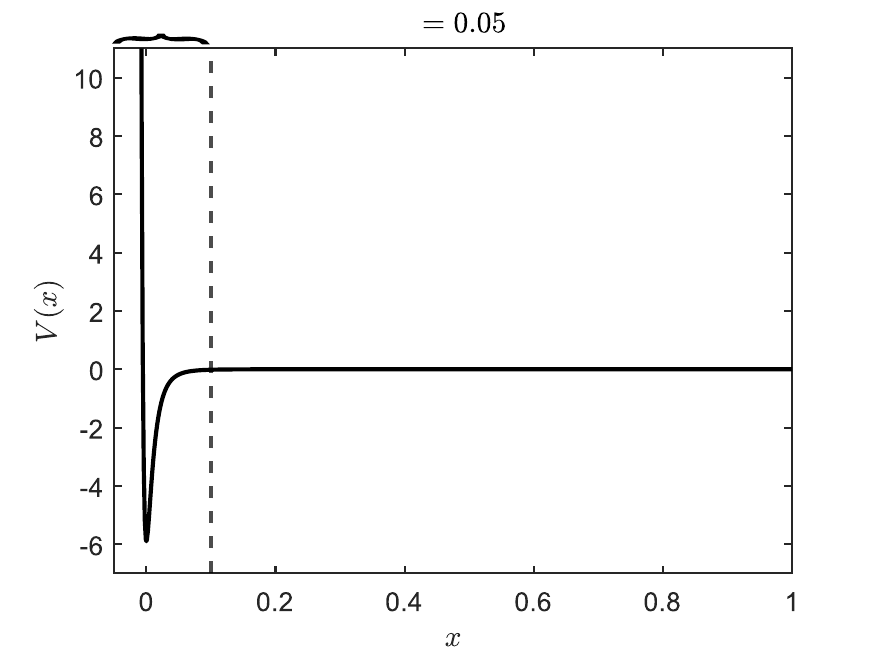}
			\put(28,45){{$\delta$}}
			\put(9.5,46.5){{$\Omega^\delta_{\rm b}$}}
			\put(46,46.5){{$\Omega_{\rm f}$}}
			\put(-1.,45){(b)}
		\end{overpic}
		% \centering
		% \includegraphics[width=0.85\textwidth]{Figures/V_x_delta_05.pdf}
	\end{minipage}
	\caption{\textit{(a): Scheme of the experimental setup.  On the top left there is a zoom in of the anions behaviour at the air-surface of the bubble, with the hydrophobic tails inside the air bubble, and the hydrophilic heads on the surface. The water level is H and the detector is located in a point P. (b) Scheme of the potential $V(x)$, defined in Eq.~\eqref{eq_V_expr}, where $\delta$ is the thickness of the attractive-repulsive  part.}}
	\label{fig_delta}
\end{figure}

%% ----------------------------------------------
For simplicity, here we describe only the one space dimension model (see \cite{astuto2023multiscale} for more details and dimensions). We assume that the fluid domain, which is not affected by the bubble, is $\Omega^\delta_{\rm f} = [L\delta,1]$, and that the attractive-repulsive mechanism of the bubble is simulated inside a thin region $\Omega^\delta_{\rm b}= [-\delta,L\delta]$, with $L$ a constant of order 1. 
It means that the potential $V(x)=0$ for $x \in \Omega^\delta_{\rm f}$.

In particular, the particles, that are close to the bubble, are attracted by its surface, but, at the same time, when an attracted particle is at very short distance from the surface of the bubble, it is repulsed, so that it cannot enter inside the bubble. This repulsion simulates the impermeability of the air bubble, and, in \cite{astuto2023multiscale} we choose the Lennard-Jones potential (LJ) as a prototypical attractive-repulsive potential, as follows
\begin{eqnarray}
	\label{eq_V_expr}
	V(x) = E\left( \left(\frac{x+\delta}{\delta}\right)^{-12} - 2\left(\frac{x+\delta}{\delta}\right)^{-6} \right),
\end{eqnarray}
where $\delta$ denotes the range of the potential and $E$ represents the depth of the well, (see Fig.~\ref{fig_delta} (b)).

In \cite{astuto2023multiscale} we proposed a \textit{multiscale model}, based on asymptotic expansion in $\delta$, to describe this adsorption-desorption behaviour. The space multiscale nature derives from the potential that is not negligible only in $\Omega^\delta_{\rm b}$, which is very small compared to the entire domain. 

%leading to some extra computational effort to ensure that the correct behaviour is captured. 
%In order to overcome this difficulty, we propose a multiscale model for $\varepsilon \ll 1$ in which the domain of the problem is reduced to $\Omega^\varepsilon_f$ and the effect of $V(x)$ on $\Omega^\varepsilon_b$ is approximated by a suitable boundary condition at the origin and obtained as follows. 

Summarizing, the 1D {multiscale model} for a single carrier, in the low concentration approximation, can be obtained for $\delta \ll 1$ (in general, for $\delta/L_x \ll 1$ where $L_x$ is the length of the 1D domain) as follows:
\begin{align}\label{reduced1d}
	\frac{\partial  c }{\partial t} &= D\frac{\partial^2  c }{\partial x^2} \quad {\rm {in} }\, x \in [0,1]\\
	\frac{\partial  c }{\partial x} &= 0  \quad {\rm {at} }\, x = 1, \qquad \mathcal{M}\frac{\partial  c }{\partial t} = D\frac{\partial  c }{\partial x}  \quad {\rm {at} }\, x = 0
\end{align}
and
\begin{equation}
	\label{expr_M}
	\mathcal{M} = \delta\int_{0}^{L+1}\exp\left(-U(\zeta)\right)d\zeta.
\end{equation}
where $U(\zeta)= \phi \left( \zeta^{-12} - 2\zeta^{-6} \right)$ is a non dimensional form of the potential $V(x)$, with $\zeta = 1 + x/\delta \in [0,L+1]$ is a rescaled variable, $L$ is the distance at which the potential $U$ is negligible and $\displaystyle \phi = {E}/{k_BT}$. In our case, we pose $L=2$.

%% ------------------------------------------------

In this scenario, it is important the role played by oscillating traps with particles, as well as, the computation of their adsorption and desorption rates at the surface of the trap. In \cite{ASTUTO2023111880}, we add an advection term to the diffusion equation, to take into account the movement of the fluid, due to the oscillations of the bubble. Then we couple the advection-diffusion equation for the particle concentration, with the Stokes equations for the fluid velocity, and we
consider different kinds of oscillations, e.g., radial oscillations or shape deformations, where we show how the diffusion rate depends on the type of oscillations and on its frequency. %Particularly relevant is the adsorption rate when the bubble is exposed to intense forced oscillations, {\color{red} near resonance}.
%\todo[inline]{aggiungere riferimento?}
%Bubble oscillations of the order of a few nanometers are selectively excited, and surfactant transport is accurately measured \cite{Raudino20168574}. The oscillation frequency is of the order of hundredths \textit{Hz}, while the diffusing time is of the order of hours: these two different scales in time introduce in the model a multiscale challenge. 
%The surfactant concentration past the oscillating bubbles was detected by conductivity measurements \cite{Raudino20168574}. The results highlight the role of surface oscillations on the diffusant capture rate. Particularly unexpected is the overshoots during the adsorption process. The phenomenon is particularly relevant when the bubbles are exposed to intense forced oscillations near resonance.

In this work, we focus on the time multiple scale nature of the problem. The technique we use is based on the assumption that the motion of the bubble, and the fluid, are periodic in time, with a period $\varepsilon$ that is much shorter than typical diffusion times. Indeed, in laboratory experiments, bubble oscillations frequency is of the order of hundred {Hz}, while the diffusing time is of the order of hours (see \cite{Raudino20168574,CiCP-31-707} for more details).

Standard numerical methods, for the Eq.~\eqref{eq_ode}, produce errors of the order $\Delta t^p/\varepsilon^q$, for some positive $p$ and $q$. In that way, the user is forced to obey to the restriction on the time step to obtain the desired accuracy, i.e., $\Delta t \lesssim \varepsilon^{q/p}$ . This restriction becomes prohibitive for small values of $\varepsilon$. We will follow the strategy adopted in \cite{chartier2015uniformly,chartier2020new,chartier2022derivative,crouseilles2013asymptotic,crouseilles2017uniformly} although in the different contexts of Vlasov--Poisson equations, Klein--Gordon and nonlinear Schr\''odinger equations, to obtain a robust scheme that is able to deal with a large range of $\varepsilon \in (0,1]$ (being small or not), since our goal is to obtain a numerical scheme that is uniformly accurate in $\varepsilon$.

\section{Model}
\label{sec:model}
In this paper we investigate the time multiscale coming from the advection term in the system. As a preliminary, work we assume that the expression for the velocity is known, and we do not have to compute the solution of the Stokes equations.% \giovanni{toglierei questo commento, con ambigua interpretazione (as we did in \cite{ASTUTO2023111880})}. 

In the presence of a moving fluid, the conservation law for the local concentration of ions $c = c(\vec{x},t)$ is the same as \eqref{equation_flux}
\begin{equation}
	\frac{
		\partial c}{\partial t}=-\nabla\cdot \vec J, \quad {\rm in }\, \mathcal{S},
\end{equation}
where $\mathcal{S} \subset \mathbb{R}^2$. However, this time the flux term $\vec J$ contains a diffusion and an advection term, %\giovanni{Abbiamo usato $T$ per temperatura e tempo finale. Per piacere , sistema...}
\begin{equation} \label{eq:flux}
	\vec J=\ -D\nabla c - c\, \vec{u}, \quad t \in [0, t_{\rm fin}]
\end{equation}
where $t_{\rm fin}>0$, $D$ is the diffusion coefficient, $\vec{u} = \vec{u}(\vec x,t/\varepsilon) \in \mathbb{R}^2$ is explicitly known in space and time, and is assumed to be a periodic vector function of time with period equal to  $\varepsilon \in ]0, \varepsilon_0]$, for some $\varepsilon_0 > 0$. We add a subscript $\varepsilon$ on the concentration $c_\varepsilon = c$, to emphasize its dependence on the oscillation period, and at the end, the system reads
\begin{equation}
	\label{eq_ode}
	\frac{\partial c_\varepsilon}{\partial t} = D\Delta c_\varepsilon + \nabla \cdot (c_\varepsilon \vec{u}(t/\varepsilon)),  \quad {\rm in }\, \mathcal{S}.
\end{equation}
{From now on, we omit to indicate the explicit dependence of $\vec u$ on space, while we keep its dependence on $t/\varepsilon$.}
Let $\mathcal{S}$ be a square, and  $\Omega = \mathcal{S}\setminus \mathcal{B}$ the computational domain where $\mathcal{B}$ is a circle centered in $(0,0)$, with radius $R_{\mathcal{B}}$ (see Fig.~\ref{2Ddomain_points} (a)). The boundary of the domain is defined as $\Gamma = \partial \Omega = \Gamma_\mathcal{S} \cup \Gamma_\mathcal{B}$; see Fig.~\ref{2Ddomain_points} (a). 

Eq.~\eqref{eq_ode} is completed with homogeneous Neumann boundary conditions in $\Gamma_\mathcal{S}$ and absorption-desorption boundary conditions in $\Gamma_\mathcal{B}$ (see \cite{astuto2023multiscale} for more details), i.e., in other words
\begin{eqnarray}
	\displaystyle \nabla  c_\varepsilon \cdot n &=& 0 \quad  {\rm  on }\, \Gamma_\mathcal{S}\\ \label{eq_bc_M_2D}
	\displaystyle \mathcal M\frac{\partial  c_\varepsilon}{\partial t} &=& \mathcal M D \frac{\partial ^2 c_\varepsilon}{\partial \tau ^2}-D\frac{\partial  c_\varepsilon}{\partial n }\quad \text{ on $\Gamma_\mathcal{B}$},
\end{eqnarray}
%\giovanni{L'eq. (2.5) è sbagliata. Sarebbe corretta solo in 2D. Mi dispiace, ma i test numerici sono da rifare. L'operatore di Laplace-Beltrami sulla superficie della sfera in simmetria cilindrica diventa:
	where $n$ is the outgoing normal vector to $\Gamma$, and $\tau$ is the tangent vector to $\Gamma_\mathcal{B}$. Eq.~\eqref{eq_bc_M_2D} is the analogue expression of Eq.~\eqref{expr_M}, but in higher dimension. %A multiscale single carrier model in space has been derived in \cite{astuto2023multiscale}, which describes the interaction of the bubble on the particles (anions) by a suitable boundary condition for the advection-diffusion equation in $\Gamma_{\mathcal{B}}$, derived by mass conservation and asymptotic analysis in the region near the bubble.
	
	To close the system~(\ref{eq_ode}-\ref{eq_bc_M_2D}), we add an initial condition 
	\begin{eqnarray}
		\label{eq_IC}
		c _\varepsilon(0) &=&  c _\varepsilon^0
	\end{eqnarray}
	that does not depend on $\varepsilon$.
	%Standard numerical methods for Eq.~\eqref{eq_ode} produce errors of the order $\Delta t^p/\varepsilon^q$, for some positive $p$ and $q$. In this way, we are forced to obey to the restriction on the time step, such that $\Delta t < \varepsilon^{q/p}$ to obtain the desired accuracy. This restriction becomes prohibitive for small values of $\varepsilon$. We will follow the strategy showed in \cite{chartier2015uniformly,crouseilles2013asymptotic} to obtain a robust scheme that is able to deal with negligible values of $\varepsilon$, since our goal is to obtain a scheme that is uniformly accurate in $\varepsilon$.
	
	Considering a numerical scheme of order $q > 1$, it means by definition that for all $\varepsilon > 0$, there exists a constant $K(\varepsilon)$, and a time-step $\overline{\Delta t}(\varepsilon)$ such that, for all $\Delta t < \overline{\Delta t}(\varepsilon)$, the error $E_\varepsilon(\Delta t)$ is bounded by 
	\begin{equation}
		\label{eq_order_q}
		E_\varepsilon(\Delta t) < K(\varepsilon)\Delta t^q.    
	\end{equation}
	
	The goal of this paper is to construct numerical schemes that are stable, for all $\varepsilon \in (0, 1]$, and whose order does not depend on $\varepsilon$ and does not degrade when $\varepsilon \to 0$. In other word, there exist a constant $K$, independent of $\varepsilon$, and a time-step $\overline{\Delta t}$ such that, for all $\Delta t < \overline{\Delta t}$, the error $E(\Delta t)$ is bounded by 
	\begin{equation*}
		E(\Delta t) < K\Delta t^q,   
	\end{equation*}
	where $q$ is the same as in Eq.~\eqref{eq_order_q}.
	
	To show that the numerical schemes, proposed in this paper, are uniformly accurate in $\varepsilon$, we first observe that there are not oscillations of lengthscale proportional to $\varepsilon$ in space.

	\begin{remark}
		\label{remark_space_osc}
		Since $\vec{u}(t/\varepsilon)$ depends on $\vec{x}$ but not on $\vec{x}/\varepsilon$, the solution $c_\varepsilon$ is not oscillatory in space. This is attested by numerical tests, see Figs.~\ref{fig_detector_cos}. %-\ref{fig_detector_cos_sin}. 
		However, if $u$ oscillates in $\vec{x}$ (which means that it has some smooth dependence on $\vec{x}/\varepsilon$), then the solution $c_\varepsilon$ is oscillating in space as well. This is also observed in Fig.~\ref{fig_space_osc}.
	\end{remark}
	%The strategy that we use in this paper (explained also in \cite{chartier2015uniformly,crouseilles2013asymptotic}) consists in separating the two time scales, namely the slow one $t$ and the fast one $t/\varepsilon$, and, in Section~\ref{section_two_scales}, we consider a change of variable of the solution $c_\varepsilon(t)$, into a two-variable function $C(t, \Theta)$, under the constraint that $C(t,\Theta) = c_\varepsilon(t)$ when $\Theta = t/\varepsilon$.
	\section{Description of the spatial domain}
	\label{section_discr_space}
	\begin{figure}
		\centering
		\begin{minipage}[b]
			{.49\textwidth}
			\centering
			\begin{overpic}[abs,width=0.75\textwidth,unit=1mm,scale=.25]{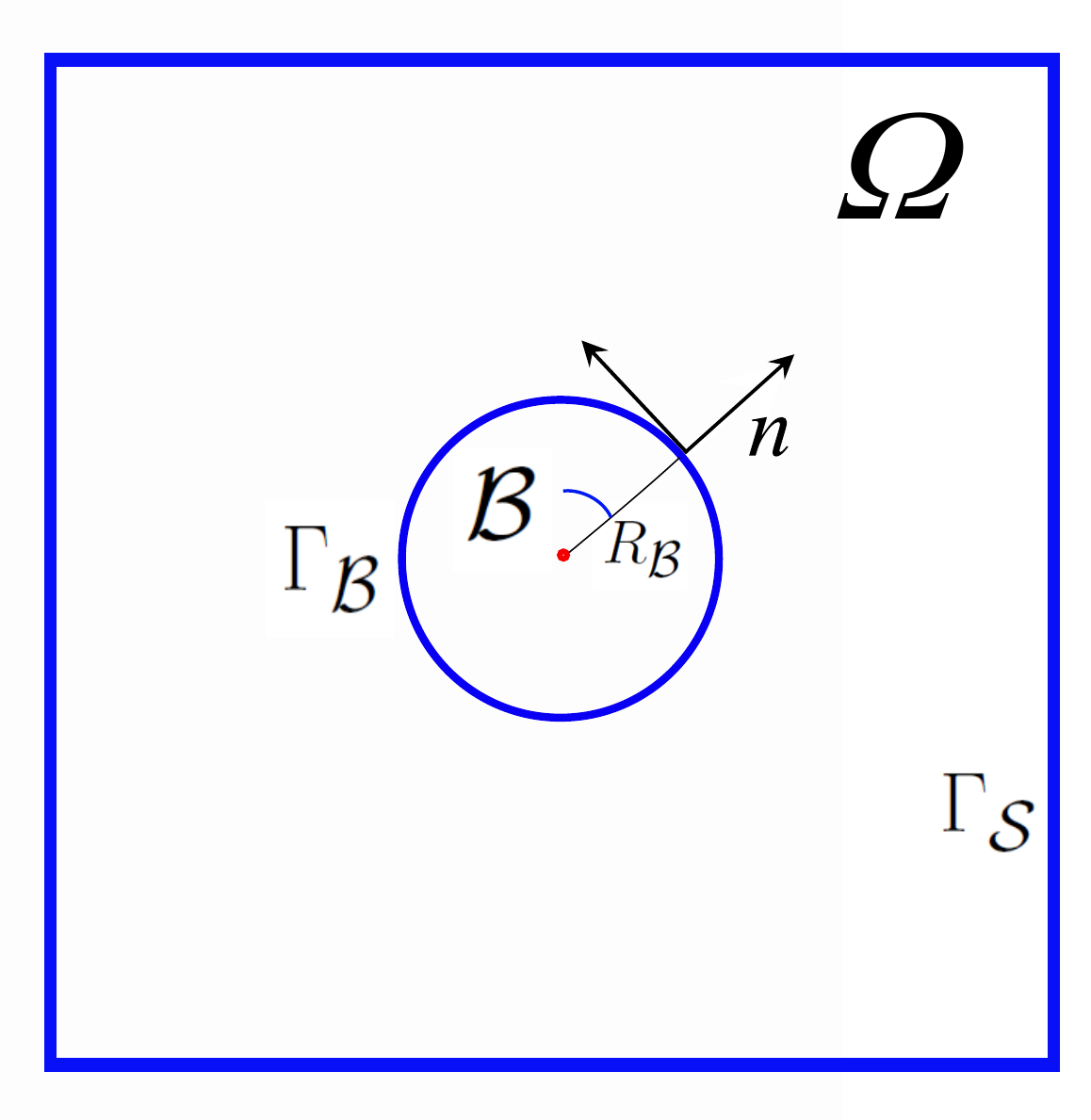}
				\put(-3.,58){(a)}
				\put(24.7,2.7){\dashline{3}(0,0)(0,44)}
				\put(29.5,33){\small $\alpha$}
				%\dashline(0,1){44}
				\put(32.5,39){$\tau$}
			\end{overpic}	%\includegraphics[width=0.7\textwidth]{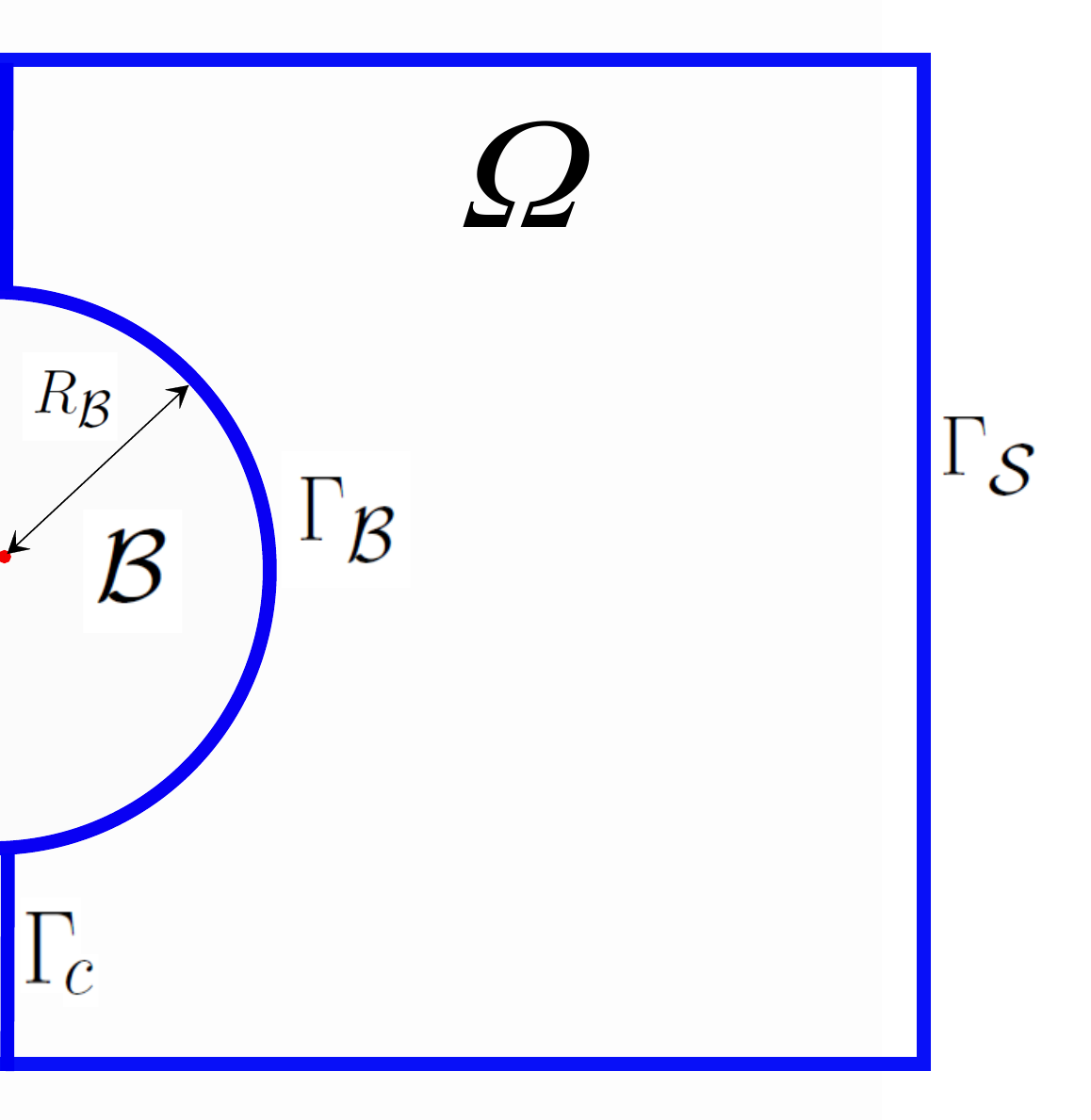}
		\end{minipage}\hfill
		\begin{minipage}[b]
			{.49\textwidth}
			\centering
			\begin{overpic}[abs,width=0.8\textwidth,unit=1mm,scale=.25]{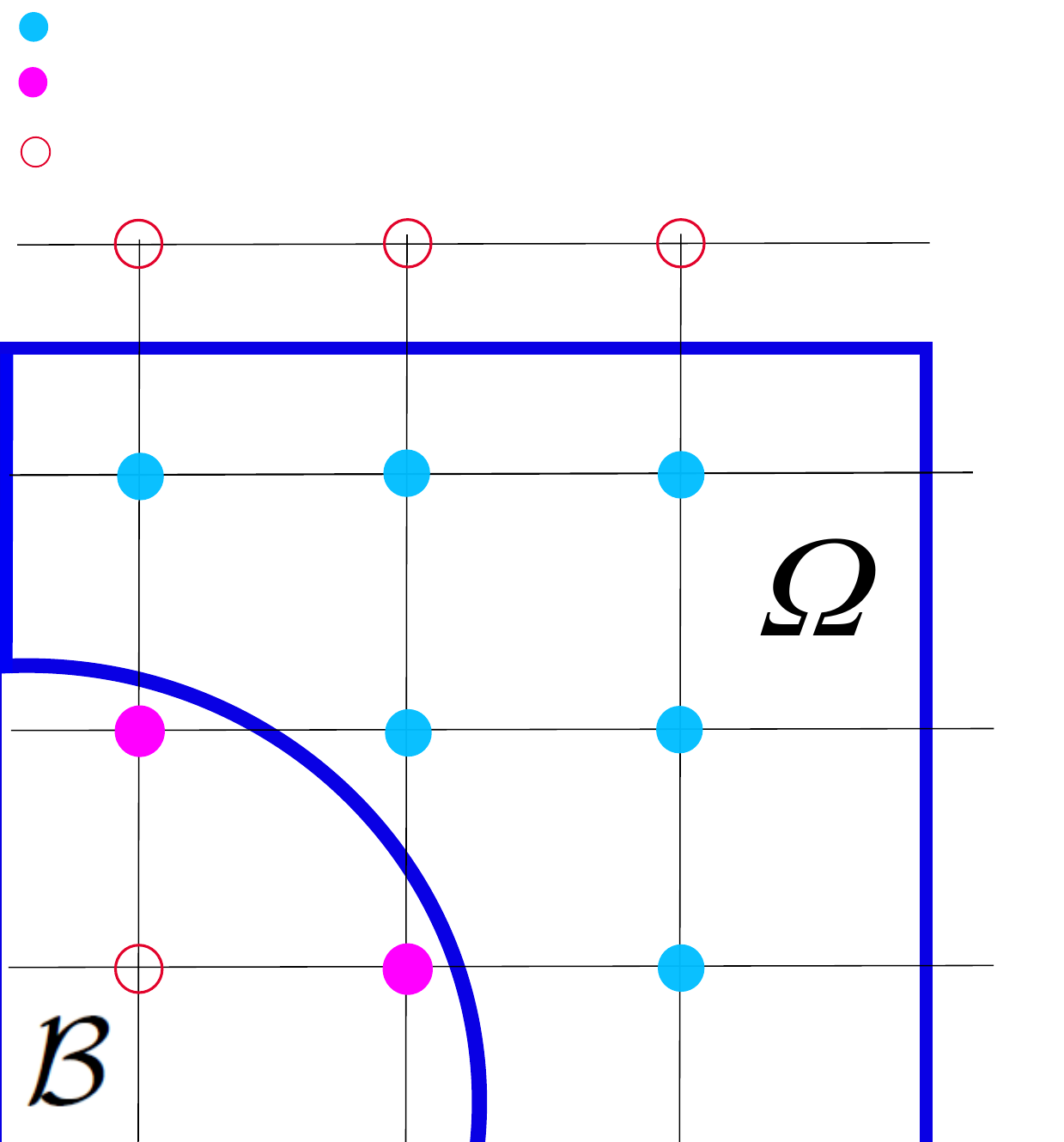}
				\put(4,54){{Inactive points}}
				\put(4,58){{Ghost points}}
				\put(4,62){{Inside points}}
				\put(-8.,58){(b)}
			\end{overpic}	%\includegraphics[width=0.75\textwidth]{classification_points3D}
		\end{minipage}
		\caption{\textit{(a): Representation of the domain $\Omega$, where $\Gamma_{\mathcal{S}}$ is the external wall, while $\mathcal{B} $ is the bubble with boundary $\Gamma_{\mathcal{B}}$ and radius $R_\mathcal{B}$. The dashed line represents the axis of specular symmetry in 2D and of rotation symmetry in 3D. (b): Classification of the inside (light blue circles), ghost (pink circles) and inactive (red hole circles) points.}}
		\label{2Ddomain_points}
	\end{figure}
	In this section, we describe the space discretization for Eqs.~(\ref{eq_ode}-\ref{eq_IC}). This numerical scheme in space provides a second order accurate method. For more details and accuracy tests about the coupling of the advection-diffusion equation with the Stokes equations see \cite{ASTUTO2023111880}, while, for a detailed description of the Navier-Stokes equations in arbitrary moving domains, see \cite{COCO2020109623}.

	The domain is $\Omega = ([-L_x/2,L_x/2]\times [-L_y/2,L_y/2])\setminus \mathcal{B}$, with $\mathcal{B}$ a circle centered in $(0,0)$ and radius $R_{\mathcal{B}}$ (see Fig.~\ref{2Ddomain_points}), and the problem reads:
	\begin{eqnarray*} 
		\left\{
		\begin{array}{l}
			\displaystyle     \frac{\partial c_\varepsilon}{\partial t} = D\Delta c_\varepsilon + \nabla \cdot (c_\varepsilon \vec{u}(t/\varepsilon))  \quad \text{ in $\Omega$} \\
			\displaystyle \nabla  c_\varepsilon \cdot n = 0 \quad  \text{ on $\Gamma_\mathcal{S}$}\\
			\displaystyle \mathcal{M}\frac{\partial  c_\varepsilon}{\partial t} = 
			\mathcal{M} D \Delta_\perp c_\varepsilon
			%        \frac{\partial ^2 c_\varepsilon}{\partial \tau ^2}
			-D\frac{\partial  c_\varepsilon}{\partial n }\quad \text{ on $\Gamma_\mathcal{B}$}
		\end{array}
		\right.
	\end{eqnarray*}
	where the expression for the velocity $\vec{u}(t/\varepsilon) = \vec{u}(x,y,t/\varepsilon)$ is known, $n$ is the outgoing normal vector to $\Gamma_\mathcal{B}$, and $\Delta_\perp = {\partial^2}/{\partial \tau^2}$ denotes the Laplace-Beltrami operator on the circumference of the circle
	(see Fig.~\ref{2Ddomain_points})\footnote{This is the same operator that we adopted in  \cite{ASTUTO2023111880} when working in the 3D axisymmetric case. We remark here that the exact operator in that case is  
		\[
		\Delta_\perp = \frac{1}{\sin \alpha} \frac{\partial}{\partial \tau} \left(\sin \alpha \frac{\partial}{\partial \tau}\right)
		\]
		where $\alpha$ is displayed in Fig.~\ref{2Ddomain_points} (a), and the dashed line represents the axis of specular symmetry in 2D and rotation symmetry in 3D.}  
	\subsection{Space discretization}
	We use a uniform square Cartesian cell-centered discretization, with $\Delta x = \Delta y = h$, and the set of grid points is $\mathcal{S}_h = (x_h,y_h) = \{(x_i,y_j)=(- h/2 + ih,- h/2 + jh), (i,j) \in \{1,\cdots,N\}^2 \}$, where $N \in \mathbb{N}$ and 
	$h = L_x/N$ with $L_x = L_y = 2$. Here we define the set of internal points $\Omega_h = \mathcal{S}_h \cap \Omega$, the set of bubble points $\mathcal{B}_h =\mathcal{S}_h \cap \mathcal{B}$ and the set of ghost points $\mathcal{G}_h$, which are bubble points with at least an internal point as neighbor, and are formally defines as follows
	\begin{equation}
		(x_i,y_j) \in \mathcal{G}_h \iff (x_i,y_j) \in \mathcal{B}_h \text{ and } \{(x_i \pm h,y_j),(x_i,y_j\pm h) \} \cap \Omega_h \neq \emptyset.
	\end{equation}
	The other grid points, $\mathcal{S}_h\setminus(\Omega_h \cup \mathcal{G}_h)$, are called inactive points. See Fig.~\ref{2Ddomain_points} (b) for a classification of inside, ghost, and inactive points.
	Let $N_I = |\Omega_h|$ and $N_G = |\mathcal{G}_h|$ be the cardinality of the sets $\Omega_h$ and $\mathcal{G}_h$, respectively, and $\mathcal{N} = N_I + N_G$ the total number of active points. 
	We compute the solution $ c_{\varepsilon,h} $ at the grid points of $\Omega_h \cup \mathcal{G}_h$, by using finite difference discretization of the equations on the $N_I$ internal grid points, and by suitable interpolation
	to define the conditions for the $N_G$ ghost values. Since each condition on a ghost point may involve other ghost points, the conditions on the ghost points are coupled. For this reason, the whole $\mathcal{N} \times \mathcal{N}$ system, with non-eliminated boundary conditions, is considered. 
	%\giovanni{Nel caso volessimo semplificare il modello numerico usando formalmente $N^2$ incognite, in modo da non impazzire con la numeriazione dei punti, si potrebbe aggiungere una frase del tipo: The matrix elements corresponding to the inactive points are diagonal, indicating that such points do not interact with the rest of the grid points.}
	%%% --------------------------------------- pec
	%The diffusion term suggests a central differencing scheme, which is second order accurate, and it is stable even in presence of an advection term, provided the so called \emph{mesh P\'{e}clet number} is smaller than four \cite{Wesseling2023600}. We therefore choose a spaces step $h$ such that $\left|{u}\right| < {4}/{h}$.
	%%% ------------------------------------------
	
	Representing $ c_\varepsilon, c^0_\varepsilon $ and $\vec{u}(t/\varepsilon)$ as column vectors $ c_{\varepsilon,h} = (\ldots,  c^\varepsilon_{i,j}, \ldots)^\top, c^0_{\varepsilon,h} = (\ldots,  c^{0,\varepsilon}_{i,j}, \ldots)^\top$, $\vec{u}_{h}(t/\varepsilon) = (\ldots,  \vec{u}_{i,j}, \ldots)^\top \in \mathbb{R}^{\mathcal{N}}$, where $\vec{u}_{i,j} = [{u}^x_{i,j},{u}^y_{i,j}]$, the problem \eqref{eq_ode} is then discretized in space, leading to a linear system
	\begin{eqnarray}
		\partial_t c_{\varepsilon,h} &=& \left(L_h +  Q_h\,\vec{u}_h(t/\varepsilon) \right)  c_{\varepsilon,h}, \quad c^0_{\varepsilon,h} = c_{\varepsilon,h}(t = 0) ,
		\label{pde2d}
	\end{eqnarray}
	where $L_h$ and $Q_h$ are $\mathcal{N} \times \mathcal{N}$ matrices representing the discretization of the derivative and the interpolation operators. 
	%We denote by $L_h^{(i,j)}= \left(L_h^{(i,j),1},\ldots,L_h^{(i,j),N_I+N_G} \right)$ and $Q_h^{(i,j)}= \left(Q_h^{(i,j),1},\ldots,Q_h^{(i,j),N_I+N_G} \right)$ the rows of $L_h$ and $Q_h$, respectively, associated with the grid point $(x_i,y_j)$. 
	If $P_{ij} = (x_i,y_j) \in \Omega_h$ is an internal grid point (as in Fig.~\ref{stencil} (a)), we discretize the diffusion and advection terms as follows
	\begin{eqnarray} 
		% L_h^{(i,j)}  c_{\varepsilon,h} &=& D \frac{ c^\varepsilon_{i+1,j}+ c^\varepsilon_{i-1,j}+ c^\varepsilon_{i,j+1}+ c^\varepsilon_{i,j-1} - 4 c^\varepsilon_{i,j}}{h^2} \\
		% Q_h^{(i,j)}  c_{\varepsilon,h} & = & \frac{ c^\varepsilon_{i+1,j} -  c^\varepsilon_{i-1,j} +  c^\varepsilon_{i,j+1} -  c^\varepsilon_{i,j-1}}{2h}.	
		\label{eq_Lh}
		L_h\, c_{\varepsilon,h}\Big|_{i,j} &=& D \left( \frac{c^\varepsilon_{i,j+1}  + c^\varepsilon_{i,j-1} + c^\varepsilon_{i+1,j}  + c^\varepsilon_{i-1,j} - 4c^\varepsilon_{i,j}}{h^2} \right)
		%D \frac{ c^\varepsilon_{i+1,j}+ c^\varepsilon_{i-1,j}+ c^\varepsilon_{i,j+1}+ c^\varepsilon_{i,j-1} - 4 c^\varepsilon_{i,j}}{h^2} 
		\\ \nonumber Q_h\,\left( \vec{u}_h(t/\varepsilon)\,c_{\varepsilon,h}\right)\Big|_{i,j} & = & \frac{u^x_{i+i,j}\,c^\varepsilon_{i+1,j} - u^x_{i-i,j}\,c^\varepsilon_{i-1,j} + u^y_{i,j+1}\,c^\varepsilon_{i,j+1} - u^y_{i,j-1}\,c^\varepsilon_{i,j-1}}{2h}
		%\frac{ c^\varepsilon_{i+1,j} -  c^\varepsilon_{i-1,j} +  c^\varepsilon_{i,j+1} -  c^\varepsilon_{i,j-1}}{2h}.	
	\end{eqnarray}
	To close the linear system, we must write an equation for each ghost point. If $G=(x_i,y_j) \in \mathcal{G}_h$ is a ghost point, then we discretize the boundary condition in \eqref{eq_bc_M_2D} for $\Gamma_{\mathcal{B}}$, following a ghost-point approach similar to the one proposed in \cite{COCO2013464,COCO2018299}, and summarised as follows. We first compute the closest boundary point $B \in \Gamma_\mathcal{B}$ by
	\[
	B =O + R_\mathcal{B} \frac{O-G}{|O-G|},
	\]
	where $O$ is the center of the bubble. Then, we identify the upwind nine-point stencil starting from $G=(x_G,y_G)=(x_i,y_j)$, containing $B=(x_B,y_B)$:
	\[
	\left\{ (x_{i+s_x m_x},x_{j+s_y m_y}) \colon m_x,m_y=0,1,2 \right\},
	\]
	where $s_x = \text{SGN} (x_B-x_G)$ and $s_y = \text{SGN} (y_B-y_G)$. The solution $ c_{\varepsilon,h} $ and its first and second derivatives are then interpolated at the boundary point $B$ using the discrete values $ c^\varepsilon_{i,j}$ on the nine-point stencil.
	%%%---------------
	% The interpolations can be obtained as tensor products of 1D interpolations in the axis directions. First, we interpolate in $r-$direction, moving from $G$ of a distance $\vartheta_x h$, and finding the three points in the rows $\{j, j+1, j+2\}$ (whole circle points in Fig.~\ref{stencil} (b)). Analogously, we move from $G$ in $y-$direction, of a distance $\vartheta_y h$, and we find the three points in the columns $\{i,i+1,i+2\}$. 
	% \todo[inline]{add more details about interpolation}
	% In detail, the 1D quadratic interpolations using the grid points $x_{i-2},x_{i-1},x_{i}$ to evaluate the function, its first derivative and the second derivative on $x_i - \vartheta h$ are given by
	% \[
	% \widehat{ c }(x_i + \vartheta\,h) =  \sum_{m=0}^2 \gamma_{m}(\vartheta) \,  c^\varepsilon_{i+m},
	% \quad
	% \widehat{ c }'(x_i + \vartheta\,h) =  \sum_{m=0}^2 \gamma'_{m}(\vartheta) \,  c^\varepsilon_{i+m},
	% \quad
	% \widehat{ c }''(x_i + \vartheta\,h) =  \sum_{m=0}^2 \gamma''_{m}(\vartheta)\, c^\varepsilon_{i+m},
	% \]
	% where
	% \[
	% \gamma(\vartheta) = \left( \frac{(1-\vartheta)(2-\vartheta)}{2}, \quad \vartheta (2-\vartheta), \quad \frac{\vartheta(\vartheta-1)}{2} \right)
	% \]
	% \[
	% \gamma'(\vartheta) = \frac{1}{h} \left( \frac{(2\vartheta-3)}{2}, \quad 2(1-\vartheta), \quad \frac{(2\vartheta-1)}{2} \right)
	% \]
	% \[
	% \gamma''(\vartheta) = \frac{1}{h^2} \left( 1, \quad -2, \quad 1 \right).
	% \]
	%---------------------------------
	We start defining (see Fig.~\ref{stencil} (b))
	\[ 
	\vartheta_x =  s_x (x_B-x_G)/h, \qquad
	\vartheta_y =  s_y (y_B-y_G)/h,
	\]
	with $0\leq \vartheta_x,\vartheta_y < 1$.
	The 2D interpolation formulas are:
	\begin{multline}\label{coeffsLSstencil} 
		\widehat{c}(B) = \sum_{m_x,m_y=0}^2 l_{m_x}(\vartheta_x) l_{m_y}(\vartheta_y)  c _{i+s_x m_x,j+s_y m_y},
		\\
		\frac{\partial \widehat{c}}{\partial x}(B) = s_x \sum_{m_x,m_y=0}^2 l'_{m_x}(\vartheta_x) l_{m_y}(\vartheta_y)  c _{i+s_x m_x,j+s_y m_y},
		%\\
		%\frac{\partial \widehat{c}}{\partial }(B) = s_y \sum_{m_x,m_y=0}^2 l_{m_x}(\vartheta_x) l'_{m_y}(\vartheta_y)  c _{i+s_x m_x,j+s_y m_y},
		\\
		\frac{\partial^2 \widehat{c}}{\partial x^2}(B) = \sum_{m_x,m_y=0}^2 l''_{m_x}(\vartheta_x) l_{m_y}(\vartheta_y)  c _{i+s_x m_x,j+s_y m_y},
		%\\
		%\frac{\partial^2 \widehat{c}}{\partial y^2}(B) = \sum_{m_x,m_y=0}^2 l_{m_x}(\vartheta_x) l''_{m_y}(\vartheta_y)  c _{i+s_x m_x,j+s_y m_y},
		\\ 
		\frac{\partial^2 \widehat{c}}{\partial x \partial y}(B) = s_x\, s_y \sum_{m_x,m_y=0}^2 l'_{m_x}(\vartheta_x) l'_{m_y}(\vartheta_y)  c _{i+s_x m_x,j+s_y m_y},
	\end{multline}
	where
	\[
	l(\vartheta_\alpha) = \left( \frac{(1-\vartheta_\alpha)(2-\vartheta_\alpha)}{2}, \, \vartheta_\alpha (2-\vartheta_\alpha), \, \frac{\vartheta_\alpha(\vartheta_\alpha-1)}{2} \right),
	\]
	\[
	l'(\vartheta_\alpha) = \frac{1}{h} \left( \frac{(2\vartheta_\alpha-3)}{2}, \, 2(1-\vartheta_\alpha), \, \frac{(2\vartheta_\alpha-1)}{2} \right), \, l''(\vartheta_\alpha) = \frac{1}{h^2} \left( 1, \, -2, \, 1 \right), \quad \alpha = x,y,
	\] 
	%\giovanni{Secondo me manca un divisione per $h$ nella espressione della derivata prima ed una divisione per $h^2$ nella espressione della derivata seconda, poiche la derivata di $\vartheta$ rispetto a $x$ è $1/h$}
	and where we omit ${\partial \widehat{c}}/{\partial y}(B)$ and ${\partial^2 \widehat{c}}/{\partial y^2}(B)$ they are analogue to the $x-$coordinate derivatives. Finally, the rows of $L_h$ associated with the ghost point $G=(x_G,y_G)$ are defined by evaluating the boundary condition on $\Gamma_\mathcal{B}$, i.e.
	\begin{equation}\label{QHghost}
		L_h c_{\varepsilon,h}\Big|_B = D \left. 
		\Delta_\perp \widehat{c}
		%\frac{\partial ^2 \widehat{ c }}{\partial \tau ^2} 
		\right|_B - 
		\frac{D}{\mathcal{M}} \left. \frac{\partial \widehat{ c }}{\partial n} \right|_B,
	\end{equation}
	and 
	\begin{equation}
		\label{normaleq}
		\frac{\partial }{\partial \tau} = \tau_x\,\frac{\partial }{\partial x} + \tau_y\,\frac{\partial }{\partial y}, \quad
		\frac{\partial }{\partial n} = n_x\,\frac{\partial }{\partial x} + n_y\,\frac{\partial }{\partial y},
	\end{equation}
	% \begin{eqnarray}\label{normaleq}
		% 	\frac{\partial }{\partial n} = n_x\,\frac{\partial }{\partial x} + n_y\,\frac{\partial }{\partial y}, &\qquad
		% 	\displaystyle \frac{\partial ^2}{\partial \tau ^2} =
		% 	\displaystyle \tau_x^2\,\frac{\partial^2 }{\partial x^2} + 2 \tau_x \tau_y\, \frac{\partial }{\partial x}\frac{\partial }{\partial y} + \tau_y^2\,\frac{\partial^2 }{\partial y^2}, %\\
		% 	% \qquad
		% 	%(n_x,n_y) = \frac{O-G}{|O-G|}, & \qquad (\tau_x,\tau_y)= (-n_y,n_x).
		% \end{eqnarray}
	with $(\tau_x,\tau_y)= (-n_y,n_x)$, $\cot \theta = n_x/n_y$. 
	For a spherical bubble, \\
	$(n_x,n_y) = ({O-G})/{|O-G|},\, \, \cot \theta =x/y$.
	%At the end, the problem \eqref{system3D} is discretized in space, leading to a linear system
	% \begin{equation}\label{eq_discr_space}
		% {\partial_t} c_{\varepsilon,h} = \left(L_h + Q_h \vec{u}_h(t/\varepsilon)\right) c_{\varepsilon,h}, \qquad c_{\varepsilon,h}(t=0) = c^0_{\varepsilon,h}
		% \end{equation}
	% where $L_h$ and $Q_h$ are the $\mathcal{N} \times \mathcal{N}$ matrices representing the discretization of the diffusive Laplacian operator and $Q_h$ the advection term in system~\eqref{system3D}.

\subsection{Complex-shaped bubbles: a level-set approach}
The discretization in the previous sections for the spherical bubble can extended to more general shapes by adopting a level-set approach. In detail, the bubble $\mathcal{B}$ can be implicitly defined by a level set function $\phi(x,y)$ that is positive inside the bubble, negative outside and zero on the boundary $\Gamma_\mathcal{B}$ (see, for example, \cite{sussman1994level,Osher,russo2000remark,book:72748}):
\begin{eqnarray}
	\mathcal{B} = \{(x,y): \phi(x,y) > 0\}, \qquad
	\Gamma_\mathcal{B} = \{(x,y): \phi(x,y) = 0\}.
\end{eqnarray}
The unit normal vector $n$ in \eqref{normaleq} can be computed as $n = \frac{\nabla \phi }{|\nabla \phi|}$
%\begin{equation}\label{LSnormal}
%	n = \frac{\nabla \phi }{|\nabla \phi|}
%\end{equation}
where the level-set function $\phi$ is assumed to be explicitly known. For a spherical bubble $\mathcal{B}$ centered at the origin, the most convenient level-set function in terms of numerical stability is the the signed distance function between $(x,y)$ and $\Gamma_\mathcal{B}$, i.e.\ $\phi=R_\mathcal{B}-\sqrt{x^2+y^2}$.\footnote{For arbitrary smooth level set function, and for small distances, a good approximation of the signed distance function $d(\vec{x},t)$ is given by
	$d(\vec{x},t) \approx \phi(\vec{x},t)/|\nabla\phi(\vec{x},t)|$.}

\begin{figure}[htp]
	\centering
	\hfill
	\begin{minipage}[b]
		{.45\textwidth}
		\centering
		\begin{overpic}[abs,width=0.75\textwidth,unit=1mm,scale=.25]{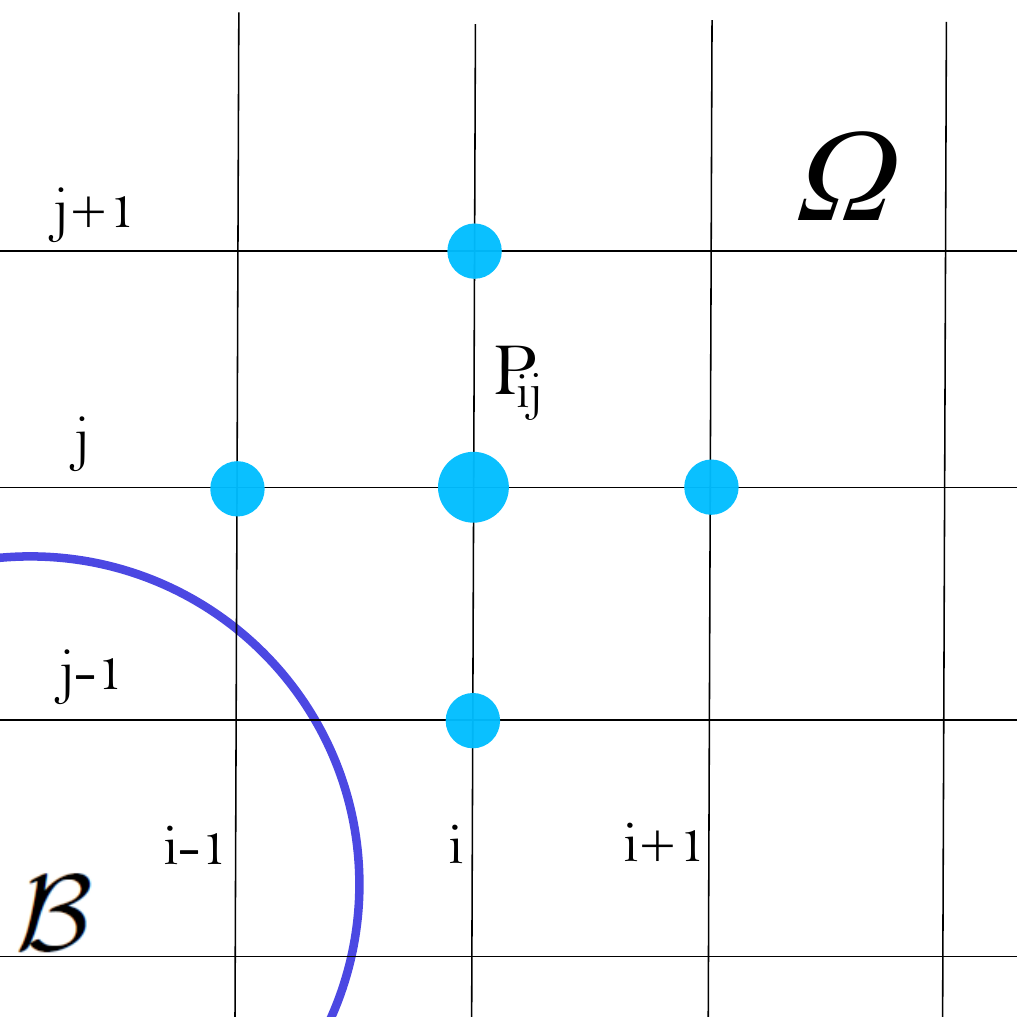}
			\put(-4.,50){(a)}
		\end{overpic}  
	\end{minipage}\hfill
	\begin{minipage}[b]
		{.49\textwidth}
		\centering
		\begin{overpic}[abs,width=0.65\textwidth,unit=1mm,scale=.25]{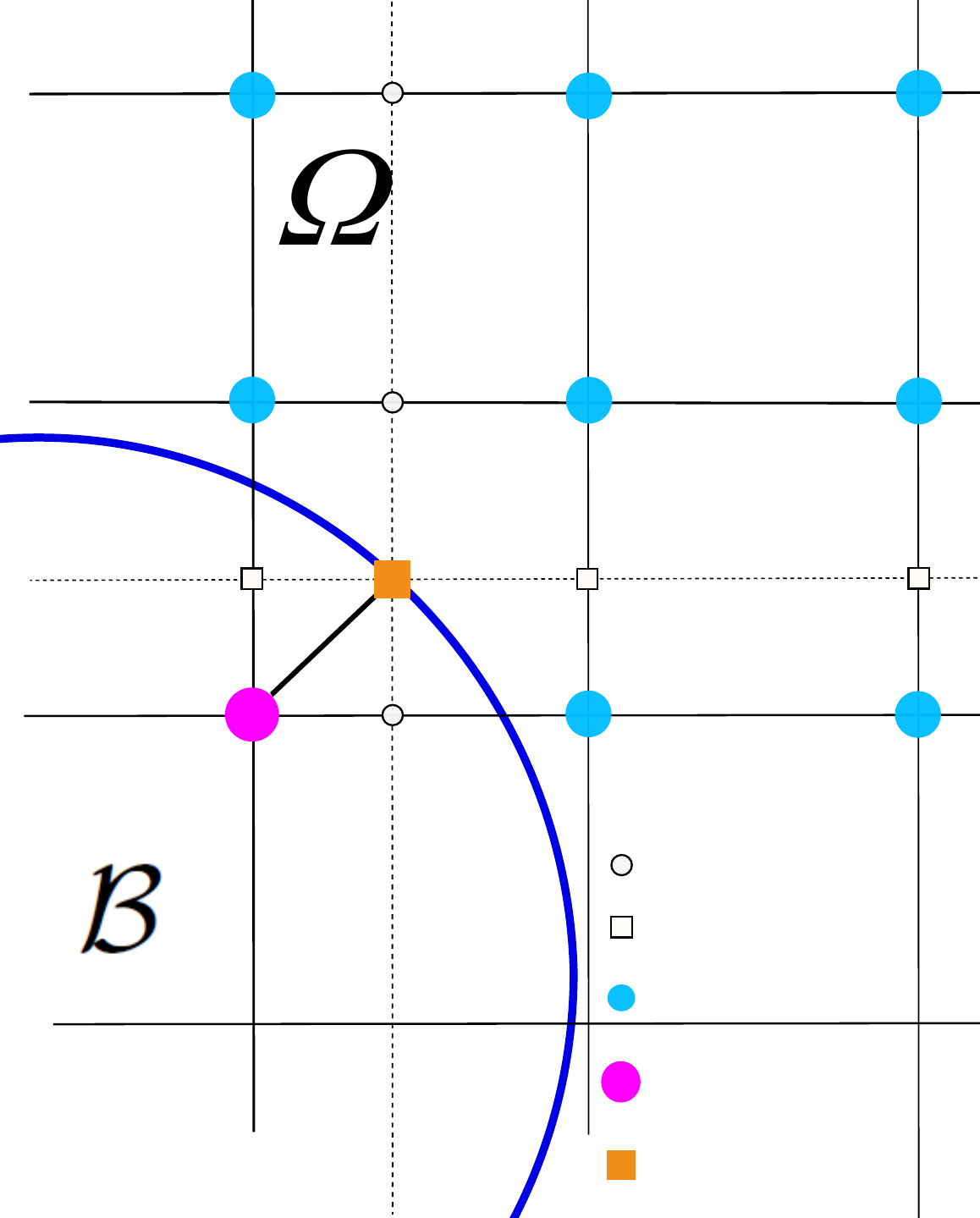}
			\put(12.5,19.5){\small $\vartheta_x h$}
			\put(5,27){\small $\vartheta_y h$}
			\put(-7.,53){(b)}
			\put(32,17){\footnotesize $x$-interpolation}
			\put(32,14){\footnotesize  $y$-interpolation}
			\put(32,10){\footnotesize  Inside points}
			\put(32,6){\footnotesize  Ghost points}
			\put(32,1.5){\footnotesize  Boundary points}
			\put(21,32){B}
			\put(8.5,19){G}
		\end{overpic}  
	\end{minipage}
	\hspace*{\fill}
	\caption{\textit{(a): Five-point stencil for the discrete operator $L_h$ of the second derivatives in space for the internal points $P_{ij} = (x_i,y_j)$. (b): Representation of the Upwind nine-points stencil associated with the ghost point $G$, boundary point $B$ and the relative outgoing normal vector $n$ to $\Gamma$.}}
	\label{stencil}
\end{figure}

\section{A two--scale formulation}
In this section our goal is to provide a formal derivation of the asymptotic models for Eq.~\eqref{pde2d}, to $1^{\rm st}$ and $2^{\rm nd}$ order in $\varepsilon$, and how the two--scale formulation can be used to derive uniformly accurate numerical schemes. The strategy is to formulate and analyze the equations obtained by decoupling the slow variable $t$ and the fast variable $\Theta = t/\varepsilon$, following the same strategy adopted in  
\cite{chartier2020new,chartier2022derivative,crouseilles2013asymptotic,crouseilles2017uniformly}. In this way, we obtain the two--scale formulation of Eq.~\eqref{pde2d}, and then we derive the asymptotic models. Note that, one could also use the two--scale formulation to derive uniformly accurate numerical schemes solving the original equations in the vein of \cite{chartier2020new,chartier2022derivative,crouseilles2013asymptotic,crouseilles2017uniformly}. Since in this paper we are solving a preliminary case, in which the velocity is explicitly known, adding a new variable $\Theta$ is not necessary. We will follow another way to construct uniformly accurate schemes.  

The use of averaging methods is very common when multiple scales are involved, and the aim of the averaging is to weaken the stiffness of the short scale $t/\varepsilon$. %, and to choose a region in which the considered time average is large compared to small scale and small compared to the large one. 
We will consider a reformulation of the problem, that is more general and in which we add a degree of freedom when defining the new variable $\Theta$. The general problem is easier to solve numerically because its initial condition can be chosen in such a way that the solution, and its first and second time derivatives, are bounded in $\varepsilon$. %The strategy of the two--scale formulation is to derive the expression for its initial condition from the original initial condition of the model, that remains unaltered.

\subsection{Formal derivation of the asymptotic models: first and second order in {$\varepsilon$} }
\label{section_two_scales}
In this subsection we derive the asymptotic models for Eq.~\eqref{pde2d}, to $1^{\rm st}$ and $2^{\rm nd}$ order in $\varepsilon$.  Given $h > 0$, we consider the bounded linear operators $L_h$ and $Q_h$ on $\mathbb{R}^{\mathcal{N}}$, that were defined in Section~\ref{section_discr_space}. Then we introduce the two--scale vector-valued function $C_h(t,\Theta) \in \mathbb{R}^{\mathcal{N}}$, that is $1-$periodic in $\Theta$, and satisfies $C_h(t,t/\varepsilon) = c_{\varepsilon,h}(t)$, for all $t\geq 0$. Differentiating this quantity with respect to $t$, leads us to consider the following equation for $C_h(t,\Theta)$
\begin{equation}
	\label{eq_2scales_t_ext}
	\partial_t  C_h(t,\Theta) + \frac{1}{\varepsilon}\partial_\Theta C_h(t,\Theta) = \mathcal{F}_\Theta( C_h(t,\Theta)),  \qquad C_h(0,\Theta) = C^0_{h}(\Theta)
\end{equation}
where 
\begin{equation}
	\label{eq_F_cal}
	\mathcal{F}_\Theta( C_h(t,\Theta) ) = \left( L_h + Q_h\, \vec{u}_h(\Theta)\right) C_h(t,\Theta).
\end{equation}

%\todo[inline]{isn't this related to "well prepared initial data"? We may assume that we are beyond an initial transient, so we may assume the initial data is well prepared, in the sense that it admits an asymptotic expansion in epsilon. If so, this issue should be better explained.}

It is important to observe that, while the initial condition for the Eq.~\eqref{pde2d} is known, we do not have an explicit expression for $C^0_{h}(\Theta)$, we only know that $C^0_{h}(0) = c_{\varepsilon,h}^0$ where $c_{\varepsilon,h}^0$ is defined in Eq.~\eqref{pde2d}. This provides us a degree of freedom in the choice of $C^0_{h}(\Theta)$, which is in general made in order to get a smooth solution in $\varepsilon$ for the two--scale Eqs.~(\ref{eq_2scales_t_ext}-\ref{eq_F_cal}). Note that, any solution to Eqs.~(\ref{eq_2scales_t_ext}-\ref{eq_F_cal}), such that $C_h^0(0) = c_{\varepsilon,h}^0$, provides the solution of the original problem $c_{\varepsilon,h}(t) = C_h(t,t/\varepsilon)$.

%To find a good choice for the initial condition, we start considering the following decomposition of the solution $C_h(t)$ 
Here we perform the asymptotic analysis starting from the following decomposition of $C_h(t,\Theta)$
\begin{equation}
	\label{eq_expr_C}
	C_h(t,\Theta) = \overline{C}_h(t) + \widetilde C_h(t,\Theta),
\end{equation} 
with the choice of $\overline{C}_h(t)$ and $\widetilde C_h(t,\Theta)$
\begin{equation}
	\label{eq_Ctilde}
	\overline{C}_h(t) = \langle C(t,\bullet) \rangle
	\equiv\int_0^1 C_h(t,\Theta) d\Theta, \qquad   \langle \widetilde C_h(t,\bullet) \rangle = 0 \quad \forall t.
\end{equation} 

Now we search for the expression of $\widetilde C_h(t,\Theta)$. To do this, we substitute the expression \eqref{eq_expr_C} in Eq.~\eqref{eq_2scales_t_ext}, that becomes
\begin{eqnarray}
	\label{eq_2scales_t_ext_c_expr}
	\displaystyle \partial_t\overline{C}_h(t) + \partial_t\widetilde C_h(t,\Theta) + \frac{1}{\varepsilon}\partial_\Theta\widetilde{ C}_h(t,\Theta) &=&\mathcal{F}_\Theta(\overline{ C }_h(t)+\widetilde C_h(t,\Theta))
	%\\%		\displaystyle \partial_t\overline{ c } & =& <\mathcal{F}_\Theta( c )>
\end{eqnarray}
while the average of the Eq.~\eqref{eq_2scales_t_ext_c_expr} satisfies
\begin{equation}
	\label{eq_average_Cbar}
	\partial_t\overline{C}_h(t)  = \langle \mathcal{F}_\bullet(\overline{ C }_h(t)+\widetilde C_h(t,\bullet)) \rangle,
\end{equation}
with $ <\mathcal{F}_{\bullet}(\overline{C}_h(t) + \widetilde C_h(t,\bullet))> = \int_0^1 \mathcal{F}_{\Theta'}(\overline{C}_h(t) + \widetilde C_h(t,\Theta')) d\Theta'$.  

Now we subtract Eq.~\eqref{eq_average_Cbar} from Eq.~\eqref{eq_2scales_t_ext_c_expr} 
\begin{eqnarray*}
	\partial_t\widetilde C_h(t,\Theta) + \frac{1}{\varepsilon}\partial_\Theta\widetilde C_h(t,\Theta) & = & \mathcal{F}_\Theta(\overline{C}_h(t)+\widetilde C_h(t,\Theta))-\left<\mathcal{F}_\bullet(\overline{C}_h(t)+\widetilde C_h(t,\bullet)) \right>,
\end{eqnarray*}
and after integrating over the interval $[0,\Theta]$, we obtain 
% \displaystyle \partial_\Theta\widetilde{ c } &= &\varepsilon \left(\mathcal{F}_\Theta(\overline{ c }+\widetilde{ c })-\left<\mathcal{F}_\Theta(\overline{ c }+\widetilde{ c })\right>\right)-\varepsilon\partial_t\widetilde{ c }\\
\begin{eqnarray*}
	\widetilde C_h(t,\Theta) &= &\varepsilon \int_{0}^{\Theta}\left(\mathcal{F}_\sigma(\overline{C}_h(t) + \widetilde C_h(t,\sigma))-<\mathcal{F}_{\bullet}(\overline{C}_h(t) + \widetilde C_h(t,\bullet))> - \partial_t \widetilde C_h(t,\sigma) \right) d\sigma + {\rm k}(t),% \\
\end{eqnarray*}
where 
${\rm k}(t)$ is constant in $\Theta$, and
from Eq.~\eqref{eq_Ctilde}, we deduce that
\begin{eqnarray*}
	{\rm k}(t) & = & -\varepsilon\langle \int_{0}^{\Theta}\left(\mathcal{F}_\sigma(\overline{C}_h(t) + \widetilde C_h(t,\sigma))-\langle \mathcal{F}_{\bullet}(\overline{C}_h(t) + \widetilde C_h(t,\bullet))\rangle - \partial_t \widetilde C_h(t,\sigma)\right)d\sigma \rangle.
	%		\displaystyle \widetilde{ c }(\Theta) &=&\varepsilon \left(I-<\cdot>\right)\int_{0}^{\Theta}\left(\mathcal{F}_\sigma(\overline{ c })-<\mathcal{F}_\sigma(\overline{ c })>\right)d\sigma\\
	%		\displaystyle \widetilde{ c } &= &O(\varepsilon)
\end{eqnarray*}
At the end, the expression for $\widetilde C_h(t,\Theta)$ reads% , and we can say that is first order in $\varepsilon$. We also observe that, since its derivative in time is bounded and independent of $\varepsilon$, also $\partial_t \widetilde C_h(t,\Theta)$ is first order in $\varepsilon$, thus, we obtain the expression of $\widetilde C_h(t,\Theta)$
\begin{equation}
	\label{eq_Ctilde_2}
	\widetilde C_h(t,\Theta) = \varepsilon \left(I - \langle \cdot \rangle \right) \int_{0}^{\Theta}\left(\mathcal{F}_\sigma(\overline{C}_h(t)) - \langle\mathcal{F}_{\bullet}(\overline{C}_h(t) )\rangle \right)  d\sigma + O(\varepsilon^2),
\end{equation}
where we used that we formally have $\widetilde C_h(t,\Theta) = O(\varepsilon)$ and $\partial_t \widetilde C_h(t,\Theta) = O(\varepsilon)$, uniformly in $\varepsilon$.

At this point, the decomposition of the solution $C_h$ in Eq.~\eqref{eq_expr_C} can be rewritten as an expansion in $\varepsilon$, as follows
\begin{equation}
	\label{eq_Ch2}
	C_h = \overline{C}_h(t) + \varepsilon C_h^1(t,\Theta) + O(\varepsilon^2)
\end{equation}
where  
\begin{equation}
	\label{eq_C1h}
	C^1_h(t,\Theta) = \left(I - \langle \cdot \rangle \right) \int_{0}^{\Theta}\left(\mathcal{F}_\sigma(\overline{C}_h(t)) - \langle\mathcal{F}_{\bullet}(\overline{C}_h(t) )\rangle \right) d\sigma.
\end{equation}

Now we want to derive an approximate equation for $\overline{C}_h$, to $1^{\rm st}$ and $2^{\rm nd}$ order in $\varepsilon$, and we obtain it by substituting $C^1_h(t,\Theta) $ in Eq.~\eqref{eq_average_Cbar}
\begin{equation}
	\partial_t\overline{C}_h(t)  = \langle \mathcal{F}_\bullet(\overline{ C }_h(t)+\varepsilon  C^1_h(t,\bullet)) \rangle + O(\varepsilon^2).
\end{equation}
To close the system, we need to find a good choice for the initial condition, and we derive it from Eqs.~(\ref{eq_Ch2}-\ref{eq_C1h}). We choose 
\begin{equation}
	\label{eq_Ch0_theta}
	C_h^0(\Theta) = \overline{C}_h(0) + \varepsilon \left(I - \langle \cdot \rangle \right) \int_{0}^{\Theta}\left(\mathcal{F}_\sigma(\overline{C}_h(0)) - \langle\mathcal{F}_{\bullet}(\overline{C}_h(0) )\rangle \right) d\sigma,
\end{equation}
and if we evaluate Eq.~\eqref{eq_Ch0_theta} in $\Theta = 0$, with the assumption that $C_h^0(0) = c^0_{\varepsilon,h}$, we obtain
\begin{eqnarray}
	\nonumber
	\overline{C}_h(0) &=& c^0_{\varepsilon,h} + \varepsilon \langle \int_{0}^{\bullet}\left(\mathcal{F}_\sigma(\overline{C}_h(0)) - \langle\mathcal{F}_{\bullet}(\overline{C}_h(0) )\rangle \right) d\sigma \rangle \\ \label{eq_ic_Cbarh}
	&=& c^0_{\varepsilon,h} + \varepsilon \langle \int_{0}^{\bullet}\left(\mathcal{F}_\sigma(c^0_{\varepsilon,h}) - \langle\mathcal{F}_{\bullet}(c^0_{\varepsilon,h} )\rangle \right)d\sigma \rangle + O(\varepsilon^2).
\end{eqnarray}
At the end, we substitute Eq.~\eqref{eq_ic_Cbarh} in Eq.~\eqref{eq_Ch0_theta}, obtaining the initial condition $C_h^0(\Theta)$ as a function of the initial datum $c^0_{\varepsilon,h}$ of the  Eq.~\eqref{pde2d}
\begin{equation}
	C_h^0(\Theta) = c^0_{\varepsilon,h} + \varepsilon\int_0^\Theta \left(\mathcal{F}_\sigma(c^0_{\varepsilon,h}) - \langle\mathcal{F}_{\bullet}(c^0_{\varepsilon,h} )\rangle \right)d\sigma + O(\varepsilon^2).
\end{equation}

Finally, we have all the ingredients to write the asymptotic models, to $1^{\rm st}$ and $2^{\rm nd}$ order in $\varepsilon$:
\begin{itemize}
	\item[i)] The $1^{\rm st}$-order model reads
	\begin{eqnarray}
		\partial_t \overline{C}_h(t) &=& \langle \mathcal{F}_\bullet(\overline{C}_h(t)) \rangle \quad {\rm in }\, \Omega_h, \, \forall t > 0 \\
		\overline{C}_h(t = 0) &=& c^0_{\varepsilon,h}
	\end{eqnarray}
	where $\, \langle\mathcal{F}_\bullet(\overline{ C }_h)\rangle = \langle \left(L_h\, + Q_h\, \vec{u}_h(\bullet)\right) \overline{C}_h(t)\rangle, \,$
	and, from Eq.~\eqref{eq_Ch2}, $C_h = \overline{C}_h(t)$ is the truncation of the exact solution of Eq.~\eqref{eq_ode}, up to first order in $\varepsilon$. 
	\item[ii)] The $2^{\rm nd}$-order model reads
	\begin{eqnarray}
		\label{eq_2nd_Cbarh}
		\partial_t \overline{C}_h(t) &=& \langle\mathcal{F}_\bullet\left(\overline{ C }_h + \varepsilon C_h^1\right)\rangle \quad {\rm in }\, \Omega_h, \, \forall t \\
		\overline{C}_h(t = 0) &=& c^0_{\varepsilon,h} + \varepsilon\int_0^\Theta \left(\mathcal{F}_\sigma(c^0_{\varepsilon,h}) - \langle\mathcal{F}_{\bullet}(c^0_{\varepsilon,h} )\rangle \right)d\sigma
	\end{eqnarray}
	where $\, \langle\mathcal{F}_\bullet\left(\overline{ C }_h + \varepsilon C_h^1\right)\rangle = \langle \left(L_h\, + Q_h\, \vec{u}_h(\bullet)\right) \left(1 + \varepsilon Q_h \int_0^\bullet \vec{u}_h(\sigma)  d\sigma \right) \overline{C}_h(t)\rangle, \,$ \\
	and to obtain a second order approximation of the exact solution, we calculate 
	\begin{equation}
		\label{eq_Cbar_2nd}
		C_h(t,\Theta) = \left(1 + \varepsilon \left(I - \langle \cdot \rangle \right) \int_{0}^{\Theta}\left(\mathcal{F}_\sigma(\overline{C}_h(t)) - \langle\mathcal{F}_{\bullet}(\overline{C}_h(t) )\rangle \right) d\sigma\right)\, \overline{C}_h(t),
	\end{equation}
	where, an explicit expression of the RHS is obtained by substituting the expression of $\mathcal{F}_\Theta$ in Eq.~\eqref{eq_F_cal}
	\begin{eqnarray}
		C_h(t,\Theta) = \left(1 + \varepsilon Q_h \int_0^\Theta \vec{u}_h(\sigma)  d\sigma \right) \overline{C}_h(t).
	\end{eqnarray}
\end{itemize}
%\giovanni{Sono fuori sede, e non ho potuto controllare i passaggi matemastici della sezione precedente. Mi pare li avessi controllati la volta prima. Spero siano corretti!}
\subsection{A class of numerical schemes derived from the two--scale equation}
Here we show how to derive a uniformly accurate first and second order scheme for Eq.~\eqref{pde2d} with a two--scale formulation, though in this paper we choose another strategy. Our approach is based on a suitable time integral formulation and avoids the additional variable $\Theta = t/\varepsilon$. This variable could indeed increase the complexity of the numerical schemes. 

First we solve
\begin{eqnarray}
	\label{eq_1st_theta}
	\partial_t {C}_h(t,\Theta) + \frac{1}{\varepsilon}\partial_\Theta C_h(t,\Theta) &=&  \mathcal{F}_\Theta(C_h(t,\Theta)) \quad {\rm in }\, \Omega_h, \, \forall t > 0 \\ \label{eq_1st_theta_2}
	{C}_h(0,\Theta) &=& C^0_h(\Theta) = c^0_{\varepsilon,h}
\end{eqnarray}
where the solution $C_h(t,\Theta)$ of this problem, and its first time derivative, are uniformly bounded in $\varepsilon$. One can construct a first order uniformly accurate scheme for Eq.~\eqref{pde2d} starting from (\ref{eq_1st_theta}-\ref{eq_1st_theta_2}), and then setting $c_{\varepsilon,t}(t) = C_h(t,t/\varepsilon)$ (see, for instance, \cite{crouseilles2017uniformly}). %As we said before, since in our case the velocity is explicitly known, it is not convenient to add a new variable $\Theta$ to the system, increasing the complexity and the dimension of our initial problem.

%\giovanni{Non capisco: in pratica si approssima $C_h(t,\Theta)$ con una costante? Se non è così, come funziona il metodo, e perché non usiamo la tecnica illustrata nella sezione precedente? 
	%Forse potremmo eliminare questa sotto-sezione, e spiegarla meglio nel futuro articolo nel quale la usiamo per risolvere le equazioni di Stokes}

To obtain a second order scheme, we solve the following
\begin{eqnarray}
	\label{eq_2nd_theta}
	\partial_t {C}_h(t,\Theta) + \frac{1}{\varepsilon}\partial_\Theta C_h(t,\Theta) &=&  \mathcal{F}_\Theta(C_h(t,\Theta)) \quad {\rm in }\, \Omega_h, \, \forall t > 0 \\ \label{eq_2nd_theta_2}
	{C}_h(0,\Theta) &=& c^0_{\varepsilon,h} + \varepsilon\int_0^\Theta \left(\mathcal{F}_\sigma(c^0_{\varepsilon,h}) - \langle\mathcal{F}_{\bullet}(c^0_{\varepsilon,h} )\rangle \right)d\sigma
\end{eqnarray}
where the solution $C_h(t,\Theta)$ of this problem and its time derivatives, to first and second order, are uniformly bounded with respect to $\varepsilon$. Therefore, one can construct a second order uniformly accurate scheme for Eq.~\eqref{pde2d} by first solving (\ref{eq_2nd_theta}-\ref{eq_2nd_theta_2}), and then setting $c_{\varepsilon,t}(t) = C_h(t,t/\varepsilon)$. 

We do not apply this strategy here but we will use it in a more involved case where our model should be coupled with Stokes equations. This task is deferred to a work already under investigation.

	\section{First order accurate scheme, uniformly in {$\varepsilon$}}
	\label{section_1st_order}
	In this section we construct a scheme to solve Eq.~\eqref{pde2d}, that is first order accurate in time, and we prove that it is also uniformly accurate in $\varepsilon$. The only property that we use is that the first time derivative of the solution is uniformly bounded in $\varepsilon$, and this property is justified since we assume that the initial condition is not oscillatory in space and the boundary conditions {for the concentration} are not oscillatory in time. 
	
	In this section we define the operator $\mathcal{A}^1_{\Delta t}$ as a sum of three sparse matrices, the identity $I$ and the two tri-diagonal block matrices, $L_h$ and $Q_h$, defined in Eqs.~\eqref{eq_Lh}. In this way, it is easy to solve the linear system in Eq.~\eqref{eq_scheme_1st_2} with standard numerical techniques.

	\begin{pro} \label{pro_1st}
		Let $L_h$ and $Q_h$ be bounded operators in $\mathbb{R}^{\mathcal{N}}$ defined in Section~\ref{section_discr_space}, with $h>0$. Let $c_{\varepsilon,h}^0 \in \mathbb{R}^{\mathcal{N}}$ be bounded in $\varepsilon$, $\vec{u}_h \in \mathbb{R}^{\mathcal{N}}$ be a bounded given $1-$periodic function of time. Then, there exists a constant $\Delta t_0>0$ independent of $\varepsilon$ such that,  for all $\Delta t < \Delta t_0$, the following holds true:
		\begin{itemize}
			\item[i)] The operator $\mathcal{A}^1_{\Delta t} := I - \Delta t L_h - Q_h\int _{t^n}^{t^{n+1}}\vec{u}_h(s/\varepsilon)\,ds$ in $\mathbb{R}^\mathcal{N}$ is invertible.
			\item[ii)] The following:
			\begin{eqnarray} \label{eq_scheme_1st_1}
				c_{\varepsilon,h}^0 &=& c_{\varepsilon,h}(0),  \\ \label{eq_scheme_1st_2}
				c_{\varepsilon,h}^{n+1} &=& {\mathcal{A}^1_{\Delta t}}^{-1}c_{\varepsilon,h}^n,
			\end{eqnarray}
			for $n = 1,\cdots,M$ is a first order scheme, uniformly accurate with respect to $\varepsilon$, solving the Eq.~\eqref{pde2d}. In other words, we have $||c(t^n) - c^n||\leq K \Delta t$ for all $n = 1,\cdots,M$, with $K$ independent of $\varepsilon$, $\Delta t = t_{\rm fin}/M$, $t_{\rm fin}> 0$ and $t^n = n\Delta t$.    
		\end{itemize}
	\end{pro}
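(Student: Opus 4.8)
The plan is to read the scheme as a backward-Euler--type method in which the oscillatory generator is integrated \emph{exactly} over each time step. Writing $A(s) := L_h + Q_h\vec u_h(s/\varepsilon)$ for the time-dependent bounded generator of \eqref{pde2d} and
\[
\bar A_n := \int_{t^n}^{t^{n+1}} A(s)\,ds = \Delta t\,L_h + Q_h\!\int_{t^n}^{t^{n+1}}\vec u_h(s/\varepsilon)\,ds,
\]
the operator in (i) is precisely $\mathcal A^1_{\Delta t} = I - \bar A_n$, and the update \eqref{eq_scheme_1st_2} reads $(I-\bar A_n)c^{n+1}_{\varepsilon,h} = c^n_{\varepsilon,h}$. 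Since $\vec u_h$ is bounded and $1$-periodic, one has $\|\bar A_n\| \le \Delta t\,M$ with $M := \|L_h\| + \|Q_h\|\,\|\vec u_h\|_{\infty}$ independent of $\varepsilon$ and of $n$ (the interval of integration has length $\Delta t$). For part (i) I would then invoke the Neumann series: whenever $\Delta t\,M < 1$, the operator $I-\bar A_n$ is invertible with $\|(I-\bar A_n)^{-1}\|\le (1-\Delta t\,M)^{-1}$, which also yields the stability bound $\|(I-\bar A_n)^{-1}\|\le 1 + C\Delta t$ for all $\Delta t \le \Delta t_0$ small enough, with $C$ uniform in $\varepsilon$.

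For part (ii) I would run a standard consistency--stability argument, taking care that every constant is $\varepsilon$-free. First, the uniform bound: since $\|A(s)\|\le M$ for all $s$, Gronwall applied to $\dot c = A(s)c$ gives $\|c(t)\|\le \|c^0_{\varepsilon,h}\|\,e^{Mt}$, hence $\|\dot c(t)\| = \|A(t)c(t)\| \le M\|c^0_{\varepsilon,h}\|e^{Mt_{\rm fin}}$ on $[0,t_{\rm fin}]$; both bounds are independent of $\varepsilon$. This is exactly the ``uniformly bounded first derivative'' property the section announces. Next I would estimate the one-step error. Letting $\hat c^{n+1}$ solve $(I-\bar A_n)\hat c^{n+1} = c(t^n)$ and using the exact increment $c(t^{n+1}) = c(t^n) + \int_{t^n}^{t^{n+1}} A(s)c(s)\,ds$ together with $\bar A_n\hat c^{n+1} = \int_{t^n}^{t^{n+1}} A(s)\,ds\,\hat c^{n+1}$, the local truncation error telescopes to
\[
\tau_n := c(t^{n+1}) - \hat c^{n+1} = \int_{t^n}^{t^{n+1}} A(s)\bigl(c(s)-\hat c^{n+1}\bigr)\,ds.
\]
The crucial observation is that $\|c(s)-\hat c^{n+1}\| = O(\Delta t)$ uniformly in $\varepsilon$: indeed $\|c(s)-c(t^n)\|\le \Delta t\,\|\dot c\|_\infty$ and $\|\hat c^{n+1}-c(t^n)\| = \|\bar A_n\hat c^{n+1}\|\le \Delta t\,M\|\hat c^{n+1}\|$, both controlled by $\varepsilon$-free constants. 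Since $\|A(s)\|\le M$, integrating over an interval of length $\Delta t$ yields $\|\tau_n\|\le K_2\,\Delta t^2$ with $K_2$ independent of $\varepsilon$.

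Finally I would close with discrete Gronwall. Subtracting the two updates gives the error recursion $e^{n+1} = (I-\bar A_n)^{-1}e^n + \tau_n$, where $e^n := c(t^n) - c^n_{\varepsilon,h}$ and $e^0 = 0$ by \eqref{eq_scheme_1st_1}. Using the stability bound from (i), $\|e^{n+1}\|\le (1+C\Delta t)\|e^n\| + K_2\Delta t^2$, and the discrete Gronwall lemma gives $\|e^n\|\le \tfrac{K_2}{C}\bigl(e^{Ct_{\rm fin}}-1\bigr)\,\Delta t =: K\Delta t$ for all $n\le M$, with $K$ independent of $\varepsilon$. I expect the only genuinely delicate point to be the $\varepsilon$-uniformity of the consistency estimate: an ordinary quadrature of $\int_{t^n}^{t^{n+1}} Q_h\vec u_h(s/\varepsilon)c(s)\,ds$ would bring in derivatives of $\vec u_h(\cdot/\varepsilon)$ and hence factors $\Delta t/\varepsilon$. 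The scheme sidesteps this by integrating the oscillatory coefficient $\vec u_h(s/\varepsilon)$ exactly inside $\bar A_n$ and freezing only the \emph{smooth} factor $c$, whose increment is $O(\Delta t)$ with an $\varepsilon$-free constant. Verifying carefully that no hidden $\varepsilon$-dependence creeps into $K_2$ is the heart of the argument.
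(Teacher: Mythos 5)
Your proposal is correct and takes essentially the same approach as the paper: the scheme is read as the exact integral identity with the oscillatory coefficient $\vec u_h(s/\varepsilon)$ integrated exactly and the smooth factor $c$ frozen at $t^{n+1}$, the $\varepsilon$-uniformity rests on the bound for $\partial_t c_{\varepsilon,h}$ giving an $O(\Delta t^2)$ one-step error with $\varepsilon$-free constants, and the conclusion follows from a discrete Gronwall recursion producing a constant of the form $e^{Kt_{\rm fin}}-1$. The differences are only in packaging (you invert $\mathcal{A}^1_{\Delta t}$ via a Neumann series and split off an explicit truncation error $\tau_n$, whereas the paper argues $\mathcal{A}^1_{\Delta t}\to I$ and manipulates the implicit inequality $\|e_{n+1}\|(1-K_3\Delta t)\le \|e_n\|+K_3\Delta t^2$ directly), and yours is in fact marginally more complete, since you establish the a priori bound on $\|c_{\varepsilon,h}\|$ by Gronwall where the paper takes it for granted.
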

	
	\begin{proof}
		To prove i), we first show that  $\mathcal{A}^1_{\Delta t} \to I$, when $\Delta t \to 0$ since the operators $L_h$ and $Q_h$, and the function $\vec{u}_h$ are bounded. 
		% %%% REMOVE
		% REMOVE
		% and we prove that, $\mathcal{A}_{\Delta t}$ is invertible when $\Delta t \to 0$. We start calculating the norm of the operator $\mathcal{A}_{\Delta t}$, as follows
		% \[
		% ||\mathcal{A}_{\Delta t}|| = \left|\left|I - \Delta t L_h - Q_h\int _{t^n}^{t^{n+1}}\vec{u}_h(s/\varepsilon)\,ds\right|\right| \leq ||I|| + \Delta t (||L_h|| + ||Q_h||\,||u||_\infty),
		% \]
		% and we observe that $||\mathcal{A}_{\Delta t}||\to ||I||$ when $\Delta t \to 0$. Since $I$ is invertible, (i.e. ${\rm det}(I) \neq 0$) and the determinant is a continuous function, 
		Thus there exists a constant $\widehat K > 0$ (independent of $\varepsilon$), such that, $\forall \Delta t < \widehat K$,  ${\rm det}(\mathcal{A}^1_{\Delta t}) \neq 0$, and $\mathcal{A}^1_{\Delta t}$ is invertible.
		
		To prove ii), we first show how to deduce the scheme defined in Eqs.~(\ref{eq_scheme_1st_1}-\ref{eq_scheme_1st_2}). We start integrating  Eq.~\eqref{pde2d} between $t$ and $t^{n+1}$, with $[t,t^{n+1}] \subset [t^n, t_{\rm fin}]$ 
		\begin{eqnarray}
			c_{\varepsilon,h}(t^{n+1}) -  c_{\varepsilon,h}(t) &=& \int_{t}^{t^{n+1}}\left(L_h + Q_h \vec{u}_h(s/\varepsilon)\right) c_{\varepsilon,h}(s) ds.
			\label{eq_integral_form}
		\end{eqnarray} 
		%We observe that, in Eq.~\eqref{eq_integral_form} we choose to integrate between $t$ and $t^{n+1}$, instead of $t^n$ and $t$, with $t^n<t<t^{n+1}$, to avoid a positive sign in front of the operator $L_h^2$ in Eq.~\eqref{eq_2nd_order_method} (see Remark~\ref{remark_laplacian} in Section~\ref{section_2nd_order} for further details).
		Evaluating the quantity $ c_{\varepsilon,h}(t)$ in $t = t^n$, we obtain 
		\begin{eqnarray}
			\label{eq_subst_tn1}
			c_{\varepsilon,h}(t^{n+1}) -  c_{\varepsilon,h}(t^n) &= & \int_{t^n}^{t^{n+1}}\left(L_h + Q_h \vec{u}_h(s/\varepsilon) c_{\varepsilon,h}(s)\right)ds,
		\end{eqnarray} 
		and then the following
		\begin{eqnarray}
			\label{eq_first_appr}
			c_{\varepsilon,h}(t^{n+1}) -  c_{\varepsilon,h}(t^n) \approx  
			\left(
			\int_{t^n}^{t^{n+1}}\left(L_h + Q_h \vec{u}_h(s/\varepsilon)\right)\,ds\right)
			c_{\varepsilon,h}(t^{n+1}).
		\end{eqnarray} 
		%\giovanni{Ho messo la $c_{\varepsilon,h}$ a destra, visto che l'operatore si applica da sinistra e non da destra}
		Since the operators $L_h$ and $Q_h$ do not depend on time, Eq.~\eqref{eq_first_appr} becomes 
		\begin{equation}
			c_{\varepsilon,h}(t^{n+1}) -  c_{\varepsilon,h}(t^n) \approx L_h\,\Delta t  c_{\varepsilon,h}(t^{n+1}) + Q_h c_{\varepsilon,h}(t^{n+1})\int_{t^n}^{t^{n+1}}\vec{u}_h(s/\varepsilon)ds
			\label{1st order method}
		\end{equation} 
		where the integral $\int_{t^n}^{t^{n+1}}\vec{u}_h(s/\varepsilon)\,ds$ is calculated analytically.  At the end, the scheme reads
		\begin{equation}
			\label{eq_1st_ord_scheme}
			c_{\varepsilon,h}^{n+1} - c_{\varepsilon,h}^n = L_h\,\Delta t \,  c_{\varepsilon,h}^{n+1} + Q_h  c_{\varepsilon,h}^{n+1} \int _{t^n}^{t^{n+1}}\vec{u}_h(s/\varepsilon)\,ds
		\end{equation}
		where $c_{\varepsilon,h}^n \approx c_{\varepsilon,h}(t^n)$.
		
		Now we prove that the scheme in Eq.~\eqref{eq_1st_ord_scheme} is first order in $\Delta t$, uniformly in $\varepsilon$. %We start rewriting Eq.~\eqref{eq_integral_form} 
		% \begin{equation}
			%         c_{\varepsilon,h}(t^{n+1}) - c_{\varepsilon,h}(t^n) = \int_{t^n}^{t^{n+1}}\left( L_h + Q_h \vec{u}_h(s/\varepsilon)c_{\varepsilon,h}(s)\right)ds 
			% \end{equation}
		% and its relative numerical scheme in Eq.~\eqref{eq_1st_ord_scheme}
		% \begin{eqnarray}
			%     c_{\varepsilon,h}^{n+1} - c_{\varepsilon,h}^n &=& \int_{t^n}^{t^{n+1}} \left(L_h + Q_h \vec{u}_h(s/\varepsilon)\,ds \right) c_{\varepsilon,h}^{n+1}.
			% \end{eqnarray}
		We subtract Eq.~\eqref{eq_1st_ord_scheme} from Eq.~\eqref{eq_subst_tn1}, define the error at time $t^n$ as $e_n = c_{\varepsilon,h}(t^n) - c_{\varepsilon,h}^n$, and obtain
		\begin{equation}
			\label{eq_en_0}
			e_{n+1} - e_n =\int_{t^n}^{t^{n+1}} \left( L_h + Q_h \vec{u}_h(s/\varepsilon)\right)\left(c_{\varepsilon,h}(s) - c_{\varepsilon,h}^{n+1}\right)\,ds. 
		\end{equation}
		Here we substitute $\, c_{\varepsilon,h}(s) - c_{\varepsilon,h}^{n+1} = c_{\varepsilon,h}(s) -c_{\varepsilon,h}(t^{n+1})+c_{\varepsilon,h}(t^{n+1})- c_{\varepsilon,h}^{n+1}\,$ in Eq.~\eqref{eq_en_0}, obtaining
		\begin{eqnarray}
			\label{eq_en_1} \nonumber
			e_{n+1} - e_n & = & \displaystyle \int_{t^n}^{t^{n+1}} \left( L_h + Q_h \vec{u}_h(s/\varepsilon)\right)\left(c_{\varepsilon,h}(s) - c_{\varepsilon,h}(t^{n+1})\right)\,ds \\ && \displaystyle + \int_{t^n}^{t^{n+1}} \left( L_h + Q_h \vec{u}_h(s/\varepsilon)\right)\,ds \left(c_{\varepsilon,h}(t^{n+1}) - c_{\varepsilon,h}^{n+1}\right).
		\end{eqnarray}
		%\giovanni{Per piacere, cerchiamo di essere consistenti nell'uso della freccia: $u$ o $\vec{u}$? Scegliamo un simbolo per il vettore velocità ed usiamolo in maniera consistente in tutto l'articolo.}
		Since $\vec{u}_h(t/\varepsilon)$ and $c_{\varepsilon,h}(t)$ are bounded, also the first derivative in time of $c_{\varepsilon,h}(t)$ is bounded, i.e.,  there exists a constant $K_1$, independent of $\varepsilon$, such that
		\[ ||\partial_t  c_{\varepsilon,h} ||_{\mathcal{L}^\infty(\Omega)} = ||\left(L_h + Q_h \vec{u}_h(t/\varepsilon)\right) c _{\varepsilon,h}||_{\mathcal{L}^\infty(\Omega)} < K_1.
		\]
		where $K_1 = \left(||L_h|| + ||Q_h||\,||u||_\infty\right) ||c_{\varepsilon,h}||_{\mathcal{L}^\infty(\Omega)}$. From this estimate, we obtain 
		\begin{equation}
			||c_{\varepsilon,h}(s) - c_{\varepsilon,h}(t^{n+1})||\leq K_1 ||s - t^{n+1}|| \leq K_1 \Delta t, \qquad \forall s \in [t^n,t^{n+1}].
		\end{equation}
		Now we consider the first term of the RHS of Eq.~\eqref{eq_en_1}, that is bounded by
		\begin{equation}
			\left|\left|\int_{t^n}^{t^{n+1}} \left( L_h + Q_h \vec{u}_h(s/\varepsilon)\right)\left(c_{\varepsilon,h}(s) - c_{\varepsilon,h}(t^{n+1})\right)\,ds \right|\right| \leq K_1 K_2 \Delta t^2
		\end{equation}
		where $K_2 = ||L_h + Q_h \vec{u}_h(s/\varepsilon)||_{\mathcal{L}^\infty(\Omega)}$, and, it does not depend on $\varepsilon$. 
		
		Considering now the norm of the Eq.~\eqref{eq_en_1}, the following inequalities hold
		\begin{equation}
			\label{eq_en}
			||e_{n+1}|| - ||e_n|| \leq || e_{n+1} - e_n || \leq K_1 K_2\Delta t^2 + ||e_{n+1}|| K_2\Delta t \leq K_3\Delta t^2 + ||e_{n+1}||K_3\Delta t,
		\end{equation}
		where $K_3 = \max\{K_1K_2,K_2\}$. After some algebra, adding $\Delta t$ in both sides, and defining $E_n = ||e_n||$, we have
		\begin{equation}
			\label{eq_En_rec}
			E_{n+1} + \Delta t \leq (1-K_3 \Delta t)^{-1}(E_n + \Delta t).
		\end{equation}
		if, and only if, $\Delta t < 1/K_3$. From Eq.~\eqref{eq_En_rec}, recursively to $E_0 = 0$, we obtain that 
		\[ \displaystyle E_n + \Delta t \leq (1-K_3\Delta t)^{-n}\Delta t \leq \left(1-K_3\frac{t_{\rm fin}}{N} \right)^{-N}\Delta t % \to \Delta t e^{K_3T}
		\]
		which follows that, there exists a constant $K_4$, such that
		\[E_n \leq \left(\left(1-K_3\frac{t_{\rm fin}}{N} \right)^{-N} -1 \right)\Delta t \leq K_4\Delta t\]
		$K_4$ independent of $N$, since $ \lim_{N\to +\infty}\left(\left(1-K_3\,{t_{\rm fin}}/{N} \right)^{-N} -1 \right) = \exp{(K_3t_{\rm fin})} - 1$. \\
		To conclude the proof, we define $\Delta t_0 = \min\{\widehat K, 1/K_3\}.$
	\end{proof}
	\section{Second order accurate scheme, uniformly in {$\varepsilon$}}
	\label{section_2nd_order}
	In this section we construct a scheme to solve Eq.~\eqref{pde2d}, that is second order accurate in time, and we prove that is also uniformly accurate in $\varepsilon$. To do this, we follow the approach showed in Section~\ref{section_1st_order} and the only property that we use is that the first time derivative of the solution is uniformly bounded in $\varepsilon$, justified from the assumption that the initial condition is not oscillatory in space and the boundary conditions are not oscillatory in time. %So, in order to obtain a second order uniformly accurate scheme, we only need that the first time derivative is uniformly bounded in $\varepsilon$.
	We also show how to deal with the integration of the Laplacian operator in the proof of Proposition~\ref{pro_2nd}. In Remark~\ref{remark_laplacian} we also show other alternatives.
	
	%%%%% -------
	
	\begin{pro} \label{pro_2nd}
		Let $L_h$ and $Q_h$ be finite dimensional bounded operators in $\mathbb{R}^{\mathcal{N}}$ defined in Section~\ref{section_discr_space}, with $h>0$. Let $c_{\varepsilon,h}^0 \in \mathbb{R}^{\mathcal{N}}$ be bounded in $\varepsilon$, and let $\vec{u}_h \in \mathbb{R}^{\mathcal{N}}$ be a bounded given $1-$periodic function in time. Then, there exists a constant $\Delta t_0>0$ independent of $\varepsilon$, such that for all $\Delta t < \Delta t_0$, the following holds true:
		\begin{itemize}
			\item[i)] The operator $\mathcal{A}^2_{\Delta t} = I - \mathbb{M}$ in $\mathbb{R}^{\mathcal{N}}$ is invertible, where
			\begin{align} \nonumber & \mathbb{M} = L_h\,\Delta t  + Q_h\int _{t^n}^{t^{n+1}}\vec{u}_h(s/\varepsilon)\,ds- \frac{1}{2}L_h^2\Delta t^2 \\ \nonumber & - L_hQ_h\int  _{t^n}^{t^{n+1}} \int_{s}^{t^{n+1}}\vec{u}_h(\sigma/\varepsilon)\,d\sigma ds  - Q_hL_h\int_{t^n}^{t^{n+1}}(t^{n+1} - s)\vec{u}_h(s/\varepsilon)\,ds \\ \nonumber &- Q_h^2 \int_{t^n}^{t^{n+1}}\vec{u}_h(s/\varepsilon)\int_{s}^{t^{n+1}}\vec{u}_h(\sigma/\varepsilon)\,d\sigma\,ds, 
			\end{align}    
			\item[ii)] The following scheme:
			\begin{eqnarray} \label{eq_scheme_2nd_1}
				c_{\varepsilon,h}^0 &=& c_{\varepsilon,h}(0),  \\ \label{eq_scheme_2nd_2}
				c_{\varepsilon,h}^{n+1} &=& {\mathcal{A}^2_{\Delta t}}^{-1}c_{\varepsilon,h}^n, \qquad {\rm for } \quad n = 1,\cdots,M
			\end{eqnarray}
			is a second order scheme, uniformly accurate with respect to $\varepsilon$, solving the Eq.~\eqref{pde2d}. In other words we have $||c(t^n) - c^n||\leq K \Delta t^2$,  for all $n = 1,\cdots,M$, with $K$ independent of $\varepsilon$, $t^n = n\Delta t$ and $\Delta t = t_{\rm fin}/M$.  
		\end{itemize}
	\end{pro}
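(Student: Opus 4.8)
The plan is to mirror the proof of Proposition~\ref{pro_1st}, pushing the Duhamel (Picard) iteration one order further while keeping all velocity integrals exact, so that no time derivative of $\vec u_h$ ever enters the estimates. Write $A(s/\varepsilon):=L_h+Q_h\vec u_h(s/\varepsilon)$ for brevity. For part i) I would bound the six summands of $\mathbb{M}$ one by one. The first two are $O(\Delta t)$, since $\|Q_h\int_{t^n}^{t^{n+1}}\vec u_h(s/\varepsilon)\,ds\|\le\|Q_h\|\,\|\vec u_h\|_\infty\,\Delta t$; this bound is independent of $\varepsilon$ precisely because the oscillatory integral is controlled by the sup-norm of $\vec u_h$ times the interval length, no matter how fast $\vec u_h$ oscillates. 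The remaining four are double integrals over a triangle of area $\Delta t^2/2$ and are $O(\Delta t^2)$, again uniformly in $\varepsilon$. Hence $\|\mathbb{M}\|\le C_M\Delta t$ with $C_M$ independent of $\varepsilon$, so for $\Delta t<\Delta t_1:=1/(2C_M)$ one has $\|\mathbb{M}\|<1/2$, and a Neumann series gives both the invertibility of $\mathcal A^2_{\Delta t}=I-\mathbb{M}$ and the uniform bound $\|(\mathcal A^2_{\Delta t})^{-1}\|\le(1-\|\mathbb{M}\|)^{-1}\le2$.

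For part ii) I would first recover the scheme. Starting from the exact identity obtained by integrating Eq.~\eqref{pde2d}, namely $c_{\varepsilon,h}(t^{n+1})-c_{\varepsilon,h}(t^n)=\int_{t^n}^{t^{n+1}}A(s/\varepsilon)c_{\varepsilon,h}(s)\,ds$, I substitute the \emph{exact} Duhamel representation $c_{\varepsilon,h}(s)=c_{\varepsilon,h}(t^{n+1})-\int_s^{t^{n+1}}A(\sigma/\varepsilon)c_{\varepsilon,h}(\sigma)\,d\sigma$. This yields a single integral plus a double integral, both still exact. The scheme then follows from the single approximation $c_{\varepsilon,h}(\sigma)\approx c_{\varepsilon,h}(t^{n+1})$ inside the double integral: expanding $A=L_h+Q_h\vec u_h$ and evaluating the elementary integrals (e.g.\ $\int_{t^n}^{t^{n+1}}\!\int_s^{t^{n+1}}d\sigma\,ds=\Delta t^2/2$ for the $L_h^2$ term, and leaving the velocity integrals unevaluated) reproduces exactly the six terms of $\mathbb{M}$. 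Rearranging $c^{n+1}-c^n=\mathbb{M}\,c^{n+1}$ gives $(I-\mathbb{M})c^{n+1}=c^n$, i.e.\ Eq.~\eqref{eq_scheme_2nd_2}.

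The error analysis is the heart of the argument. I set the local truncation error $\rho_n:=\big(c_{\varepsilon,h}(t^{n+1})-c_{\varepsilon,h}(t^n)\big)-\mathbb{M}\,c_{\varepsilon,h}(t^{n+1})$; the computation above shows $\rho_n=-\int_{t^n}^{t^{n+1}}A(s/\varepsilon)\int_s^{t^{n+1}}A(\sigma/\varepsilon)\big(c_{\varepsilon,h}(\sigma)-c_{\varepsilon,h}(t^{n+1})\big)\,d\sigma\,ds$. Using \emph{only} the uniform first-derivative bound $\|\partial_t c_{\varepsilon,h}\|\le K_1$ (obtained as in Proposition~\ref{pro_1st} from the boundedness of $A(\cdot/\varepsilon)$ and of $c_{\varepsilon,h}$), I estimate $\|c_{\varepsilon,h}(\sigma)-c_{\varepsilon,h}(t^{n+1})\|\le K_1\Delta t$, and with $\|A(\cdot/\varepsilon)\|\le\|L_h\|+\|Q_h\|\|\vec u_h\|_\infty=:K_2$ this gives $\|\rho_n\|\le\tfrac12 K_1K_2^2\,\Delta t^3$, uniformly in $\varepsilon$. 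Subtracting the scheme from the exact relation yields $(I-\mathbb{M})e_{n+1}=e_n+\rho_n$ for $e_n:=c_{\varepsilon,h}(t^n)-c^n_{\varepsilon,h}$, hence $E_{n+1}\le(1-\|\mathbb{M}\|)^{-1}\big(E_n+\|\rho_n\|\big)\le(1+C\Delta t)\big(E_n+\tfrac12 K_1K_2^2\Delta t^3\big)$ with $E_n:=\|e_n\|$ and $C:=2C_M$. With $E_0=0$, the discrete Gr\"onwall argument already used for Eq.~\eqref{eq_En_rec} gives $E_n\le K\Delta t^2$ with $K\sim(e^{Ct_{\rm fin}}-1)/C$ independent of $\varepsilon$ and $n$; taking $\Delta t_0:=\Delta t_1$ closes the proof.

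The main obstacle is the truncation-error step, and it is exactly where uniformity in $\varepsilon$ is won. The point is that the $O(\Delta t^3)$ bound on $\rho_n$ must be obtained from the \emph{first} time derivative of $c_{\varepsilon,h}$ alone. A naive second-order Taylor expansion of the integrand $A(s/\varepsilon)c_{\varepsilon,h}(s)$ would bring in $\partial_t^2 c_{\varepsilon,h}$, which contains $\partial_t\vec u_h(t/\varepsilon)=\varepsilon^{-1}\vec u_h'(t/\varepsilon)=O(1/\varepsilon)$ and hence is not uniform as $\varepsilon\to0$. By instead substituting the integral (Duhamel) representation of $c_{\varepsilon,h}(s)$ and freezing only $c_{\varepsilon,h}$ at $t^{n+1}$, the argument never differentiates $\vec u_h$ in time: all the $\varepsilon$-dependence is confined to the bounded oscillatory integrals $\int\vec u_h(s/\varepsilon)\,ds$, which is precisely what makes the constant $K$ independent of $\varepsilon$.
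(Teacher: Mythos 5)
Your proposal is correct and follows essentially the same route as the paper: the same Duhamel iteration (backward integral representation substituted into the integral form, then freezing $c_{\varepsilon,h}$ at $t^{n+1}$ inside the double integral) to derive the six-term operator $\mathbb{M}$, the same $O(\Delta t^3)$ truncation bound using only the uniform first-derivative estimate, and the same discrete Gr\"onwall conclusion. Your packaging of stability as a Neumann-series bound $\|(\mathcal{A}^2_{\Delta t})^{-1}\|\le (1-\|\mathbb{M}\|)^{-1}$ applied to $(I-\mathbb{M})e_{n+1}=e_n+\rho_n$ is a slightly cleaner (and more rigorous, for part i)) rendering of the paper's direct manipulation of the error inequality, but it is the same argument in substance.
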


	\begin{proof}
		To prove i), we prove that $\mathcal{A}^2_{\Delta t}$ is invertible when $\Delta t \to 0$. We start considering the norm of the operator $\mathcal{A}^2_{\Delta t}$ as before, and, analogously, since the operators $L_h$ and $Q_h$ are bounded in $\mathbb{R}^{\mathcal{N}}$, and the vector-valued function $\vec{u}_h$ is also bounded, it follows that
		$||\mathcal{A}^2_{\Delta t}||\to ||I||$ when $\Delta t \to 0$. Thus, following the same procedure as before, we say that there exists a constant $\widehat K > 0$, such that, $\forall \Delta t < \widehat K$, the operator $\mathcal{A}^2_{\Delta t}$ is invertible.
		
		To prove ii), we first show how to deduce the scheme defined in Eqs.~(\ref{eq_scheme_2nd_1}-\ref{eq_scheme_2nd_2}). First, we substitute $ c_{\varepsilon,h}(t)$ with $c_{\varepsilon,h}(s)$ in Eq.~\eqref{eq_integral_form}, and it can be rewritten as
		\begin{eqnarray}
			\label{eq_integral_form_s}
			c_{\varepsilon,h}(s) = c_{\varepsilon,h}(t^{n+1}) - 
			\int_{s}^{t^{n+1}}\left(L_h + Q_h \vec{u}_h(\sigma/\varepsilon)\right) c_{\varepsilon,h}(\sigma) d\sigma,
		\end{eqnarray}
		where we remind that the choice of integrating between $t$ and $t^{n+1}$, with $t<t^{n+1}$, is our strategy to obtain a negative sign in front of the operator $L_h^2$ in Eq.~\eqref{eq_2nd_order_method} (see Remark~\ref{remark_laplacian} for more details).
		
		Now we substitute Eq.~\eqref{eq_integral_form_s} in Eq.~\eqref{eq_integral_form}, as follows
		\begin{eqnarray} \nonumber
			&& c_{\varepsilon,h}(t^{n+1}) -  c_{\varepsilon,h}(t) = \int_{t}^{t^{n+1}}\left(L_h + Q_h \vec{u}_h(s/\varepsilon)\right) c_{\varepsilon,h}(s) ds\\ \nonumber
			&& = \int_{t}^{t^{n+1}}\left(L_h + Q_h \vec{u}_h(s/\varepsilon)\right)\left( c_{\varepsilon,h}(t^{n+1})-\int_{s}^{t^{n+1}}\left(L_h + Q_h \vec{u}_h(\sigma/\varepsilon)\right) c_{\varepsilon,h}(\sigma)d\sigma\right)ds,
			%\label{2nd_order_def}
		\end{eqnarray} 
		we evaluate the quantity $ c_{\varepsilon,h}(t)$ in $t=t^n$
		\begin{align}
			\label{eq_subst_tn}
			c_{\varepsilon,h}(t^{n+1}) -  c_{\varepsilon,h}(t^n) & = \\ \nonumber\displaystyle \int_{t^n}^{t^{n+1}} \left(L_h +  Q_h \vec{u}_h(s/\varepsilon)\right) & \left( c_{\varepsilon,h}(t^{n+1})-\int_{s}^{t^{n+1}}\left(L_h + Q_h \vec{u}_h(\sigma/\varepsilon)\right)c_{\varepsilon,h}(\sigma)d\sigma\,\right)ds
		\end{align} 
		and we approximate the equation in the following way
		\begin{align} \label{approx_sigma}
			\displaystyle c_{\varepsilon,h}^{n+1} -  c_{\varepsilon,h}^n & = \\ \nonumber \displaystyle \int_{t^n}^{t^{n+1}} & \left(L_h + Q_h \vec{u}_h(s/\varepsilon)\right) \left( c_{\varepsilon,h}^{n+1}-\int_{s}^{t^{n+1}}\left(L_h + Q_h \vec{u}_h(\sigma/\varepsilon)\right)d\sigma\, c_{\varepsilon,h}^{n+1}\right)ds,
		\end{align} 
		where $c^n_{\varepsilon,h} \approx c_{\varepsilon,h}(t^n)$. Going on with the computations of the integrals, we have
		\begin{eqnarray} \label{eq_2nd_order_method}
			\displaystyle
			&& c_{\varepsilon,h}^{n+1} -  c_{\varepsilon,h}^n %\int_{t^n}^{t^{n+1}}\left(L_h + Q_h \vec{u}_h(s)\right)ds c_{\varepsilon,h}^{n+1} -\int_{t^n}^{t^{n+1}}\left(L_h + Q_h \vec{u}_h(s)\right)\left(\int_{s}^{t^{n+1}}\left(L_h + Q_h \vec{u}_h(\sigma)\right)d\sigma\right)ds c_{\varepsilon,h}^{n+1}&\\
			%& = L_h\,\Delta t \,  c_{\varepsilon,h}^{n+1} + Q_h\int _{t^n}^{t^{n+1}}\vec{u}_h(s)\,ds\, c_{\varepsilon,h}^{n+1} -\int_{t^n}^{t^{n+1}}\left(L_h + Q_h \vec{u}_h(s)\right)\left(A(t^{n+1}-s) + Q_h\int _{s}^{t^{n+1}}\vec{u}_h(\sigma)\,d\sigma\right)ds c_{\varepsilon,h}^{n+1}&\\
			=\displaystyle \Delta t \, L_h\,  c_{\varepsilon,h}^{n+1} + Q_h\, c_{\varepsilon,h}^{n+1}\int _{t^n}^{t^{n+1}}\vec{u}_h(s/\varepsilon)\,ds - \frac{1}{2}\Delta t^2\,L_h^2 c_{\varepsilon,h}^{n+1} \\ \nonumber &&\displaystyle - L_hQ_h\,c_{\varepsilon,h}^{n+1}\int_{t^n}^{t^{n+1}} \int_{s}^{t^{n+1}}\vec{u}_h(\sigma/\varepsilon)\,d\sigma ds -Q_hL_h\, c_{\varepsilon,h}^{n+1}\int_{t^n}^{t^{n+1}} ( t^{n+1} - s)\vec{u}_h(s/\varepsilon)\,ds \\ \nonumber &&\displaystyle - Q_h^2\,c_{\varepsilon,h}^{n+1} \int_{t^n}^{t^{n+1}}\vec{u}_h(s/\varepsilon)\int_{s}^{t^{n+1}}\vec{u}_h(\sigma/\varepsilon)\,d\sigma\,ds.
		\end{eqnarray} 
		
		At this point, we show that the numerical scheme is second order accurate, uniformly in $\varepsilon$. Here we subtract Eq.~\eqref{approx_sigma} from Eq.~\eqref{eq_subst_tn}, as before, obtaining 
		\begin{eqnarray} \label{eq_en_2nd_order}
			& e_{n+1} - e_n = \displaystyle \int_{t^n}^{t^{n+1}} \left(L_h + Q_h  \vec{u}_h(s/\varepsilon)\right) e_{n+1} ds \\ \nonumber
			& \displaystyle - \int_{t^n}^{t^{n+1}} \left(L_h + Q_h  \vec{u}_h(s/\varepsilon)\right) \int_{s}^{t^{n+1}}\left(L_h + Q_h  \vec{u}_h(\sigma/\varepsilon)\right)(c_{\varepsilon,h}(\sigma) - c_{\varepsilon,h}^{n+1})d\sigma\, ds.
		\end{eqnarray}
		where, again, $e_n = c_{\varepsilon,h}(t^n) - c_{\varepsilon,h}^n$. Now we add and subtract the same quantity to the RHS as before, $\, c_{\varepsilon,h}(s) - c_{\varepsilon,h}^{n+1} = c_{\varepsilon,h}(s) -c_{\varepsilon,h}(t^{n+1})+c_{\varepsilon,h}(t^{n+1})- c_{\varepsilon,h}^{n+1} \,$
		%\[ \int_{t^n}^{t^{n+1}} \left(L_h + Q_h  \vec{u}_h(s/\varepsilon)\right) \int_{s}^{t^{n+1}}\left(L_h + Q_h  \vec{u}_h(\sigma/\varepsilon)\right)ds\,c_{\varepsilon,h}(t^{n+1})
		%\]
		%obtaining
		% \begin{align} \label{eq_en_2nd}
			%     &e_{n+1} - e_n =  \int_{t^n}^{t^{n+1}} \left(L_h + Q_h  \vec{u}_h(s/\varepsilon)\right) e_{n+1} ds \\  & \nonumber
			%     - \int_{t^n}^{t^{n+1}} \left(L_h + Q_h  \vec{u}_h(s/\varepsilon)\right) \int_{s}^{t^{n+1}}\left(L_h + Q_h  \vec{u}_h(\sigma/\varepsilon)\right)(c_{\varepsilon,h}(\sigma) - c_{\varepsilon,h}(t^{n+1}))d\sigma\, ds\\ & \nonumber -
			%     \int_{t^n}^{t^{n+1}} \left(L_h + Q_h  \vec{u}_h(s/\varepsilon)\right) \int_{s}^{t^{n+1}}\left(L_h + Q_h  \vec{u}_h(\sigma/\varepsilon)\right)d\sigma\, ds (c_{\varepsilon,h}(t^{n+1}) - c_{\varepsilon,h}^{n+1}). 
			% \end{align}
		and, using the analogue procedure and the same constants seen in Eq.~\eqref{eq_en}, Eq.~\eqref{eq_en_2nd_order} becomes
		\begin{equation*}
			( 1 - K_3\Delta t - K_3^2\Delta t^2) E_{n+1} - K_3\Delta t^3 \leq E_n  
		\end{equation*}
		where $E_n = ||e_n||$. 
		
		In this case, we add ${\Delta t^2}/({K_3\Delta t + 1})$ to both sides, and, after some algebra, the inequality reads
		\[
		E_{n+1} + \frac{\Delta t^2}{K_3\Delta t + 1} \leq ( 1 - K_3\Delta t - K_3^2\Delta t^2)^{-1} \left(E_{n} + \frac{\Delta t^2}{K_3\Delta t + 1} \right),
		\]
		if $K_3\Delta t(1 + K_3\Delta t) < 1$. Recursively we obtain (using $E_0 = 0$) 
		\[
		E_{n} + \frac{\Delta t^2}{K\Delta t + 1} \leq ( 1 - K_3\Delta t - K_3^2\Delta t^2)^{-n}  \frac{\Delta t^2}{K_3\Delta t + 1}.
		\]
		As before, it is easy to show that there exists a constant $\widehat K$ that does not depend on $N$, such that $E_n \leq \widehat{K} \Delta t^2$. To conclude the proof, we define $\Delta t_0 = \min\{\widehat K, \widetilde K\}$, where $\widetilde K$ is the positive solution of the equation $1 - K_3\Delta t - K_3^2\Delta t^2 = 0$.
	\end{proof}
	\begin{remark}
		\label{remark_laplacian}
		We noticed that, the choice of the time interval for the integration of Eq.~\eqref{pde2d} can be trivial, because the wrong integration can bring instability to the numerical scheme. We need a negative sign in front of the term $\left(L_h\,\Delta t  \right)^2/2$ in Eq.~\eqref{eq_2nd_order_method}, since the operator $L_h$ represents the discrete Laplacian in space. Here we show some examples in which some instability can be generated.
		
		Let us consider the integration of Eq.~\eqref{pde2d} between $t^n$ and $t$, and between $t$ and $t^{n+1}$, with $t^n<t<t^{n+1}$
		\begin{eqnarray}
			\label{eq_remark_1}
			c_{\varepsilon,h}(t) & = & c_{\varepsilon,h}(t^n) + \int_{t^n}^{t}\left(L_h + Q_h \vec{u}_h(s/\varepsilon)\right) c_{\varepsilon,h}(s) ds \\ \label{eq_remark_2}
			c_{\varepsilon,h}(t) &=& c_{\varepsilon,h}(t^{n+1}) - 
			\int_{t}^{t^{n+1}}\left(L_h + Q_h \vec{u}_h(s/\varepsilon)\right) c_{\varepsilon,h}(s) ds.
		\end{eqnarray}    
		The first possibility is that we iterate Eq.~\eqref{eq_remark_1} in itself,  but in this case it is obvious that a negative sign will not appear. The second possibility is to substitute $c_{\varepsilon,h}(s)$ in Eq.~\eqref{eq_remark_2} with
		$c_{\varepsilon,h}(t)$ in Eq.~\eqref{eq_remark_1} to expand the integrand, obtaining
		\begin{align} \nonumber
			c_{\varepsilon,h}(t^{n+1}) = & c_{\varepsilon,h}(t^n) +  \int_{t^n}^{t^{n+1}}\left(L_h + Q_h \vec{u}_h(s/\varepsilon)\right) \\  &\left( c_{\varepsilon,h}(t^{n})+\int^{s}_{t^n}\left(L_h + Q_h \vec{u}_h(\sigma/\varepsilon)\right) c_{\varepsilon,h}(\sigma)d\sigma\right)ds,
		\end{align} 
		\begin{comment}
			and the second order approximation will read
			\begin{eqnarray} \nonumber
				c_{\varepsilon,h}^{n+1} = c_{\varepsilon,h}^n + c_{\varepsilon,h}^n \int_{t^n}^{t^{n+1}}\left(L_h + Q_h \vec{u}_h(s/\varepsilon)\right)\left( 1 +\int^{s}_{t^n}\left(L_h + Q_h \vec{u}_h(\sigma/\varepsilon)\right) d\sigma\right)ds,
			\end{eqnarray}     
		\end{comment}
		in which, again, we do not obtain a negative sign. 
		%\giovanni{non ho capito: in che senso abbiamo "negative sign": nell'integrale $s$ va da $t^n$ a $t^{n+1}$, quindi sarebbe più logico scrivere }
		%\textcolor{blue}{
			%\[
			% c_{\varepsilon,h}^{n+1} = c_{\varepsilon,h}^n + c_{\varepsilon,h}^n \int_{t^n}^{t^{n+1}}\left(L_h + Q_h \vec{u}_h(s/\varepsilon)\right)\left( 1 \textcolor{red}{+\int^{s}_{t^n}}\left(L_h + Q_h \vec{u}_h(\sigma/\varepsilon)\right) d\sigma\right)ds,
			%\]
			%}
		%\giovanni{e non c'è alcun negative sign. Io toglierei questa parte, oppure direi che ancora non va bene, ma la questione del negative sign non la capisco proprio}
		
		The third possibility is to report Eq.~\eqref{eq_remark_2} in Eq.~\eqref{eq_remark_2}, and it is the one that we show in Proposition~\ref{pro_2nd}.
	\end{remark}
	
	%\giovanni{non capisco questo remark. Da dove vengono queste relazioni?}
	\begin{comment}
		\begin{remark}
			The following quantities in Eq.~\eqref{eq_2nd_order_method}:
			\[
			\mathcal{Q}_1 := Q_h\int _{t^n}^{t^{n+1}}\vec{u}_h(s)\,ds, \qquad \mathcal{Q}_2 :=  Q_h^2 \int_{t^n}^{t^{n+1}}\vec{u}_h(s)\int_{s}^{t^{n+1}}\vec{u}_h(\sigma)\,d\sigma\,ds
			\]
			are related by: $\, \mathcal{Q}_2 = \frac{1}{2} \, \mathcal{Q}_1^2 \equiv \frac{1}{k!}\mathcal{Q}_1^k \,$ that reminds the formula of the coefficients of the Taylor expansion. It is also valid for the quantities $L_h\Delta t$ and $ \frac{1}{2}{L_h^2\Delta t^2}$.
			%Indeed, if we define the function $\mathcal{G}(s) = \int_{s}^{t^{n+1}}\vec{u}_h(\sigma)\,d\sigma$, the following holds true
			%\begin{eqnarray*}
			%	Q_h^2 \int_{t}^{t^{n+1}}\vec{u}_h(s)\int_{s}^{t^{n+1}}\vec{u}_h(\sigma)\,d\sigma\,ds &=&
			%	Q_h \int_{t}^{t^{n+1}}\vec{u}_h(s)\mathcal{G}(s)\,ds = 
			%\int_{t}^{t^{n+1}}\mathcal{G}'(s)\mathcal{G}(s)\,ds = \\ \int_{t}^{t^{n+1}}\frac{d}{ds}\frac{1}{2}\mathcal{G}^2(s)\,ds &=& 
			%	\frac{1}{2}\mathcal{G}^2(t) =
			%	 \frac{1}{2}\left(\int_{s}^{t^{n+1}}\vec{u}_h(\sigma)\,d\sigma\right)^2.
			%\end{eqnarray*}    
		\end{remark}
	\end{comment}
	\section{Numerical results}
	In this section we confirm the numerical orders of convergence, uniformly in $\varepsilon$, of the integral-type schemes in Eqs.~(\ref{eq_scheme_1st_1}-\ref{eq_scheme_1st_2}) and (\ref{eq_scheme_2nd_1}-\ref{eq_scheme_2nd_2}). We also show different tests, changing the expression for the velocity $\vec{u}_{\varepsilon,h}$, and we confirm our previous assumptions about the space oscillations, in Remark~\ref{remark_space_osc}. 
	
	The domain considered in the following tests is $\Omega = \left( [-1,1]\times[-1,1]\right)\setminus \mathcal{B}$, where $\mathcal{B}$ is a circle centered in $(0,0)$ with radius $R_\mathcal{B} = 0.2$. 
	The expression for the initial condition used in the computations is the following
	\begin{equation}\label{equation_IC}
		c^0_{\varepsilon,h} = c_{\varepsilon,h}(t=0,x_h,y_h)=%\frac{1}{\left(2\pi\sigma^2\right)^{3/2}}
		\exp\left(-\left(x_h^2+(y_h-y_0)^2\right)/2\sigma^2\right), \quad y_0,\sigma \in \mathbb R.
	\end{equation}
	
	In our tests we choose the following expressions for the velocity $\vec{u}_h(t/\varepsilon)$
	\begin{align}
		\textsc{TestCos}: \qquad\vec{u}_h & = A R_\mathcal{B} \cos(2\pi\,t/\varepsilon) 
		\cdot (x_h,y_h)^\top
		%\begin{pmatrix}
		%x_h \\
		%y_h
		%\end{pmatrix}
		/\left(x_h^2+y_h^2\right) %\\
		%\textsc{TestCos-Sin}: \qquad\vec{u}_h &= A (a_1\cos(2\pi\,t/\varepsilon) + a_2\sin(2\pi\,t/\varepsilon)) 
		%\cdot (x_h,y_h)^\top
		%\begin{pmatrix}
		%x_h \\
		%y_h
		%\end{pmatrix}
		%/\sqrt{x_h^2+y_h^2}
	\end{align}
	where $A$ is the amplitude. To show the oscillations in space, we define a test function that depends on ${x}_h/\varepsilon$ 
	\begin{equation}
		\label{eq_veloc_osc_space}
		\textsc{TestOsc}: \qquad    \vec{u}_h = A R_\mathcal{B}\cos(2\pi\,(t+x_h)/\varepsilon) \cdot (1,0)^\top.
	\end{equation}
	
	To calculate the error, we first compute a reference solution $c_{\varepsilon,h}^{\rm ref}$, choosing related reference time step $\Delta t_{\rm ref}$, number of points $N_{\rm ref}$ and final time $t_{\rm fin}$, for the following set of $\varepsilon = 10^{-k}, k \in \{0,1,2,3,4,5,6\}$. Then, we calculate different solutions $c^{\Delta t,N}_{\varepsilon,h}$, for different $\Delta t = 0.01\cdot 2^{-k}, k\in \{0,1,2,3,4,5,6\}$ and $N = 20,40,80,160$. After computing all the solutions, we calculate the $L^2-$norm of the relative error as follow:
	\begin{equation}
		{\rm error} = \frac{||c_{\varepsilon,h}^{\rm ref} - c^{\Delta t,N}_{\varepsilon,h}||_2}{||c_{\varepsilon,h}^{\rm ref}||_2}.
	\end{equation}
	\begin{figure}[htp]
		\centering
		\hfill
		\begin{minipage}
			{.45\textwidth}
			\centering		%\includegraphics[width=\textwidth]{Figures/accuracy_time_indip_eps.png}
			\begin{overpic}[abs,width=\textwidth,unit=1mm,scale=.25]{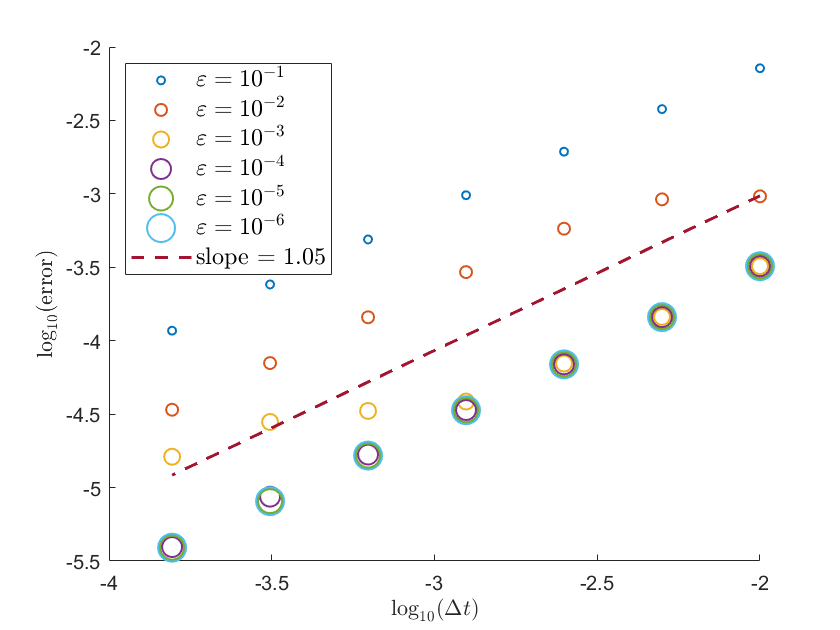}
				'\put(-1.5,50){(a)}
			\end{overpic}	
		\end{minipage}
		\begin{minipage}
			{.45\textwidth}
			\centering
			\begin{overpic}[abs,width=\textwidth,unit=1mm,scale=.25]{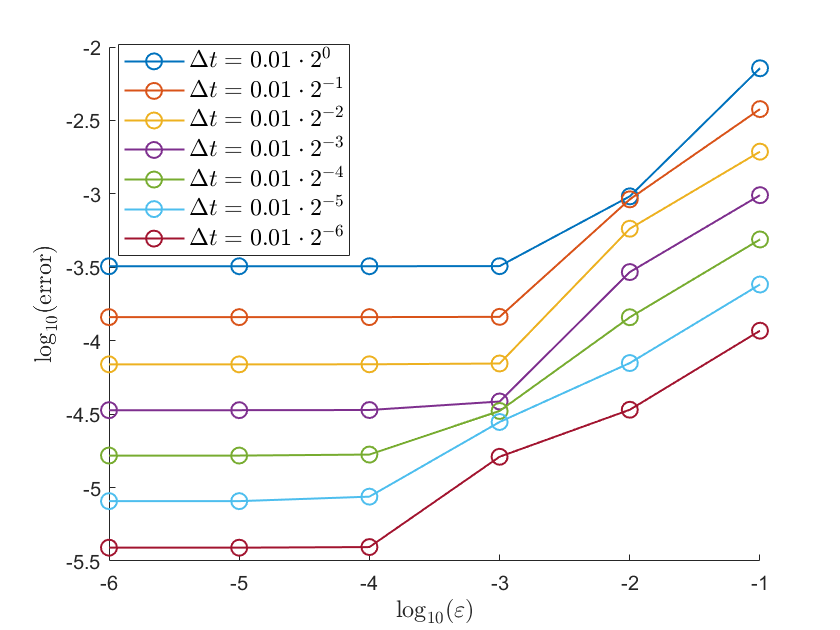}
				'\put(-1.5,50){(b)}
			\end{overpic}
		\end{minipage} \hspace*{\fill}
		\caption{\textit{Plot of the relative $L^2$ error for the first order scheme in Eq.~\eqref{eq_1st_ord_scheme} as a function of $\Delta t$ and different $\varepsilon$ (a) and as a function of $\varepsilon$ and different $\Delta t$ (b). The parameters of the tests are: $t_{\rm fin}=0.1, \Delta t_{\rm ref} = 10^{-5},  N_{\rm ref} = 160, D = 0.02, x_0 = 1.2, y_0 = 0, \sigma = 0.2, \delta = 10^{-2}, A = 1$. The initial condition is defined in Eq.~\eqref{equation_IC} and the velocity in \textsc{TestCos}}.}
		\label{fig_time_1st_order}
	\end{figure}
	
	In Fig.~\ref{fig_time_1st_order} (a) we show the $L^2-$norm of the error, as a function of $\Delta t$ and for different values of $\varepsilon$. The method that we consider here is the first order accurate numerical scheme defined in Eq.~\eqref{eq_1st_ord_scheme}. In Fig.~\ref{fig_time_1st_order} (b) we have the $L^2-$norm of the error as a function of $\varepsilon$, for different values of $\Delta t$, to show that the numerical scheme is uniformly accurate in $\varepsilon$. Analogously, in Fig.~\ref{fig_time_2nd_order} (a) and (b) we show that the numerical scheme defined in Eq.~\eqref{eq_2nd_order_method} is second order accurate, uniformly in $\varepsilon$. 
	\begin{figure}[htp]
		\centering
		\hfill
		\begin{minipage}
			{.45\textwidth}
			\centering		%\includegraphics[width=\textwidth]{Figures/accuracy_time_indip_eps.png}
			\begin{overpic}[abs,width=\textwidth,unit=1mm,scale=.25]{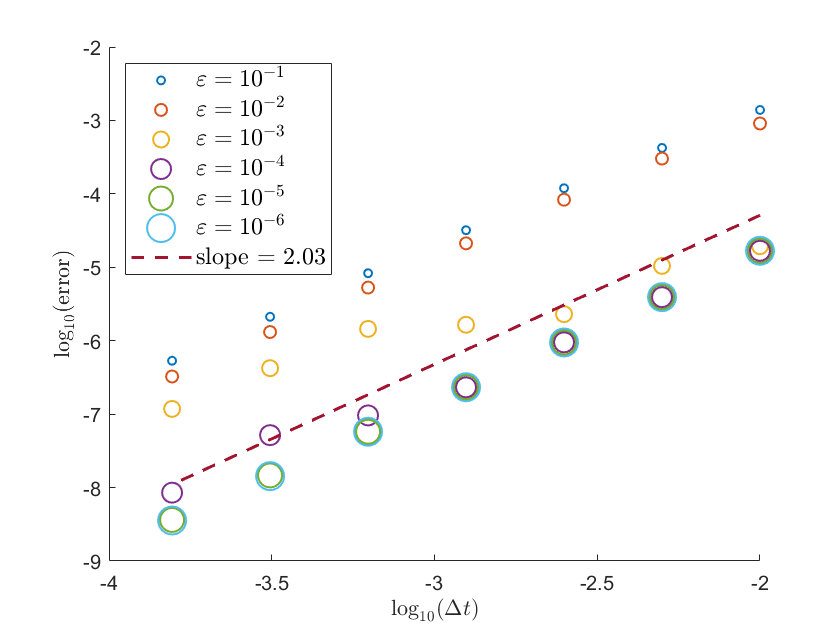}
				'\put(-1.5,50){(a)}
			\end{overpic}	
		\end{minipage}
		\begin{minipage}
			{.45\textwidth}
			\centering
			\begin{overpic}[abs,width=\textwidth,unit=1mm,scale=.25]{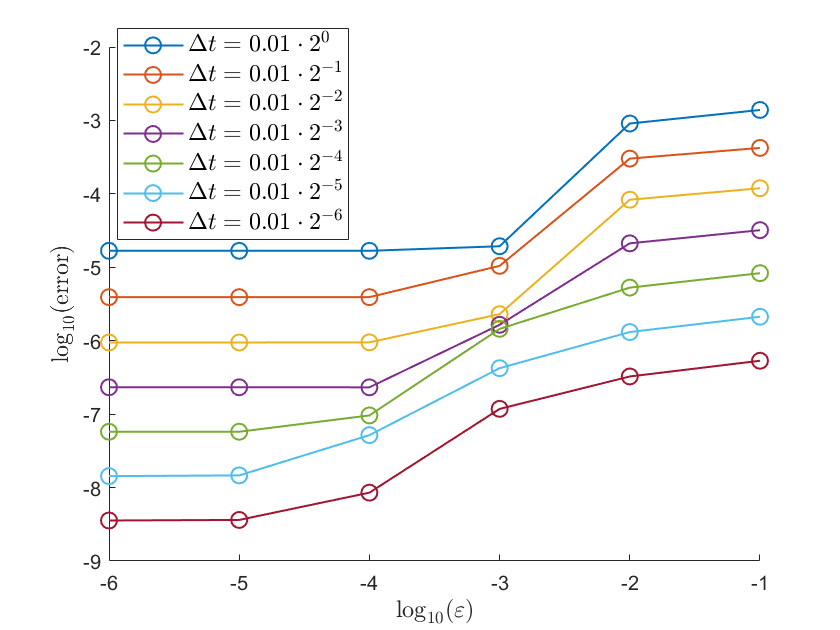}
				'\put(-1.5,50){(b)}
			\end{overpic}
		\end{minipage}
		\hspace*{\fill}
		\caption{\textit{Plot of the relative $L^2$ error for the second order scheme in Eq.~\eqref{eq_2nd_order_method} as a function of $\Delta t$ and different $\varepsilon$ (a) and as a function of $\varepsilon$ and different $\Delta t$ (b). The parameters of the tests are: $t_{\rm fin}=0.1, \Delta t_{\rm ref} = 10^{-5},  N_{\rm ref} = 160, D = 0.02, y_0 = -1, \sigma = 0.2, \delta = 10^{-2}, A = 1$. The initial condition is defined in Eq.~\eqref{equation_IC} and the velocity in \textsc{TestCos}}.}
		\label{fig_time_2nd_order}
	\end{figure}
	
	In Fig.~\ref{fig_space_2nd_order} we show the $L^2-$norm of the error of the space discretization defined in Eq.~\eqref{pde2d}. In (a) we show the error as a function of $\Delta x$ for different $\varepsilon$, and in (b) there is the error as a function of $\varepsilon$, for different $\Delta x$. As expected, the numerical scheme is second order accurate, uniformly in $\varepsilon$. 
	
	Fig.~\ref{fig_detector_cos} represents the time evolution of the solution at the detector using the  second order scheme in Eq.~\eqref{eq_2nd_order_method}, for the \textsc{TestCos}. We choose two different values of $\varepsilon$: in panel (a) $\varepsilon = 0.01$ and in panel (b) $\varepsilon = 0.001$. We show a reference solution, $c_{\varepsilon,h}^{\rm ref}$ (blue line) with $\Delta t_{\rm ref} = 10^{-4}$, together with different solutions for different time steps. The main goal is to show that for $\Delta t > \varepsilon$, the two concentrations, $c_{\varepsilon,h}^{\rm ref}$ and $c_{\varepsilon,h}^{\Delta t,N}$, overlap.
	
	In Fig.~\ref{fig_CN_01}--\ref{fig_CN_001} (a), we show the comparison between the second order numerical scheme in Eq.~\eqref{eq_2nd_order_method} (yellow stars and red diamonds), and a direct simulation using a traditional Crank-Nicolson (CN) method (green circles), together with a reference solution (blue line). The reference solutions have been computed with a time step $\Delta t_{\rm ref} = 2\times 10^{-5}$, which is much smaller than the period $\varepsilon$. The other solutions are obtained using a time step of length which is not commensurable with the period, but is of the same order of magnitude of $\varepsilon$. In the zoom--in panels (b) and (c) we see the advantages of the scheme described in this paper (green stars, red diamonds).
	The advantage of the numerical scheme described in this paper is more evident for small values of $\varepsilon$, as we show in Fig.~\ref{fig_CN_001}, with $\varepsilon = 10^{-2}$: for $\varepsilon = 10^{-1}$ accurate solutions are obtained with a time step greater than $3\varepsilon$, while in the case $\varepsilon = 10^{-2}$, the method provides an accurate solution even when the time step is one order of magnitude larger than the period  (red diamonds).

	As mentioned in the Remark~\ref{remark_space_osc}, when there is a dependence on $\vec x/\varepsilon$, spatial oscillations occur, rendering the Propositions presented in this paper not valid. In Fig.~\ref{fig_space_osc} we show the results of the \textsc{TestOsc}, considering time steps larger than $\varepsilon$ (green circles) together with a reference solution (i.e., $N_{\rm ref} = 200, \Delta t_{\rm ref} = 10^{-3}$) to confirm that the accuracy is not guaranteed when we use the time integrator in \eqref{eq_2nd_order_method}.
	\begin{figure}[htp]
		\centering
		\hfill
		\begin{minipage}
			{.45\textwidth}
			\centering		%\includegraphics[width=\textwidth]{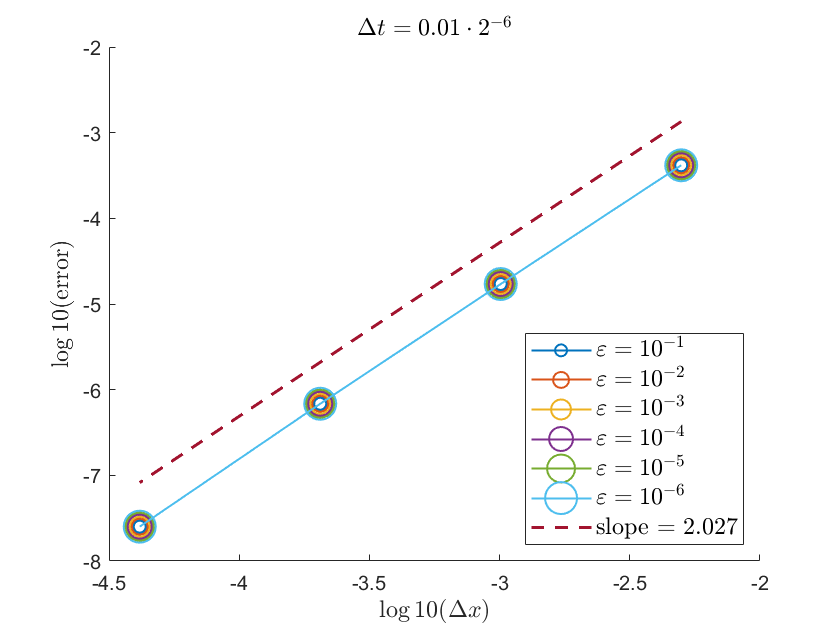}
			\begin{overpic}[abs,width=1\textwidth,unit=1mm,scale=.25]{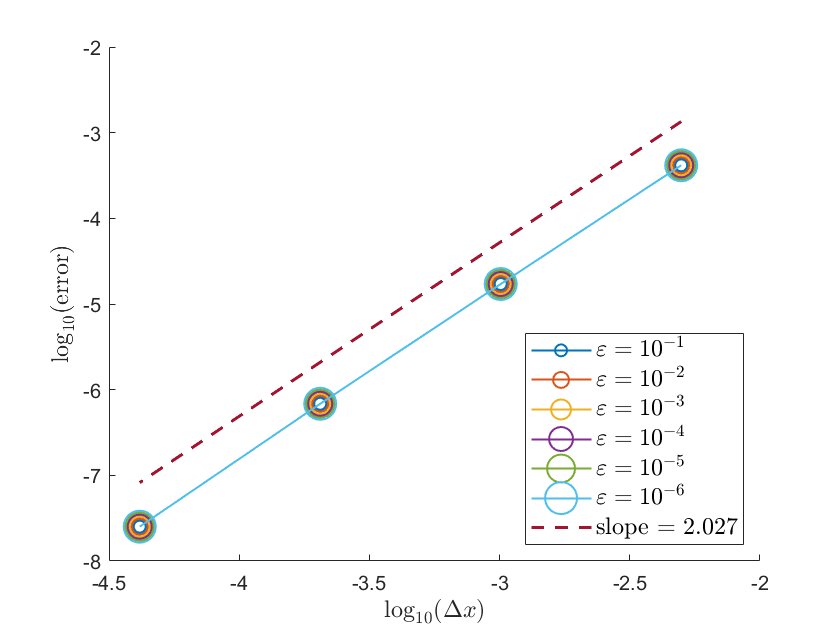}
				'\put(-1.5,50){(a)}
			\end{overpic}	
		\end{minipage}\hfill
		\begin{minipage}
			{.45\textwidth}
			\centering
			\begin{overpic}[abs,width=\textwidth,unit=1mm,scale=.25]{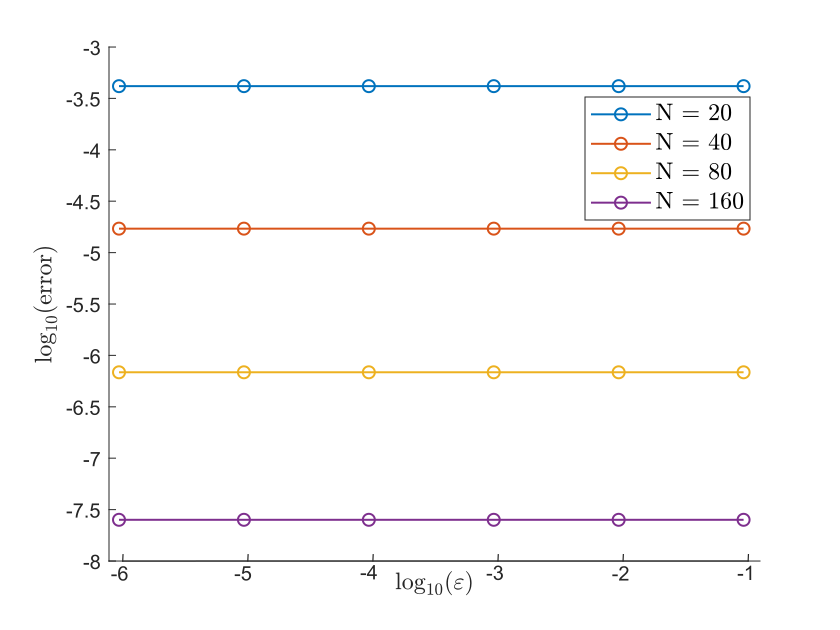}
				'\put(-1.5,50){(b)}
			\end{overpic}	 
		\end{minipage}
		\hspace*{\fill}
		\caption{\textit{Plot of the relative $L^2$ error in space for the second order scheme in Eq.~\eqref{pde2d} as a function of $\Delta x$ and different $\varepsilon$ (a) and as a function of $\varepsilon$ and different $\Delta x$ (b). The parameters of the tests are: $t_{\rm fin}=0.1, \Delta t_{\rm ref} = 10\cdot 2^6, N_{\rm ref} = 640, D = 0.02, y_0 = -1, \sigma = 0.2, \delta = 10^{-2}, A = 1$. The initial condition is defined in Eq.~\eqref{equation_IC} and the velocity in \textsc{TestCos}}.}	\label{fig_space_2nd_order}
	\end{figure}
	\begin{figure}[htp]
		\centering
		\hfill
		\begin{minipage}
			{.45\textwidth}	\centering
			\begin{overpic}[abs,width=\textwidth,unit=1mm,scale=.25]{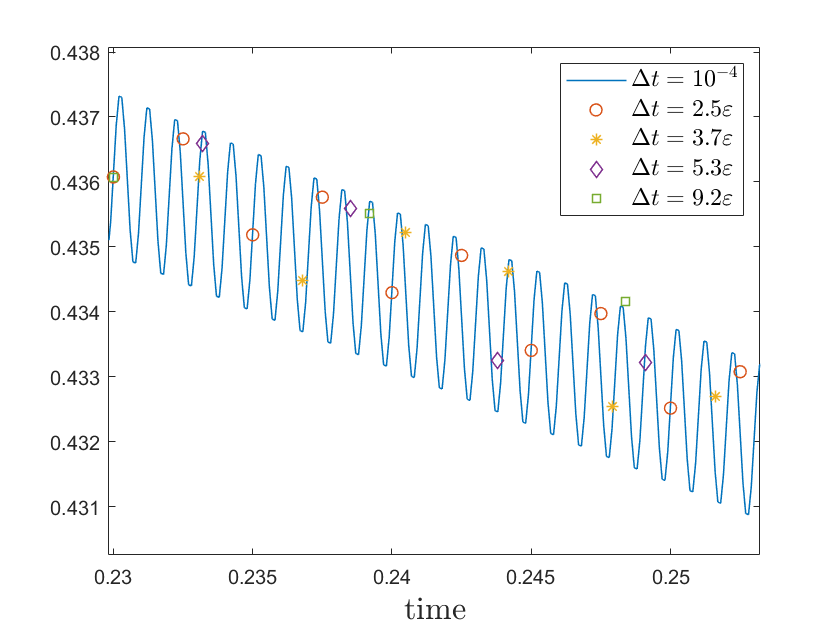}
				\put(-1.5,50){(a)}
				\put(14.,50){\textsc{TestCos}, $\varepsilon = 10^{-2}$}
			\end{overpic}
		\end{minipage}\hfill
		\begin{minipage}
			{.45\textwidth}	\centering
			\begin{overpic}[abs,width=\textwidth,unit=1mm,scale=.25]{Figures/detector_N160_Cos_eps001.png}
				\put(-1.5,50){(b)}
				\put(14.,50){\textsc{TestCos}, $\varepsilon = 10^{-3}$}
			\end{overpic}
		\end{minipage}
		\hspace*{\fill}
		\caption{\textit{Concentration $c_{\varepsilon,h}$ at the detector  for different values of $\Delta t$ with $\varepsilon=0.01$ (a) and $\varepsilon = 0.001$ in (b), for \textsc{TestCos}. The initial condition is defined in Eq.~\eqref{equation_IC} and the detector is centered in $P=(0,-0.5)$, $P\in\Omega$. The parameters of the test are: $N = 160, A = 10, \delta = 10^{-3}, y_0 = -0.75$ and $D = 0.02$.}}
		\label{fig_detector_cos}
	\end{figure}
	
	% \begin{figure}[htp]
		% 	\centering
		% 	\hfill
		% 	\begin{minipage}
			% 		{.45\textwidth}	\centering
			% \begin{overpic}[abs,width=\textwidth,unit=1mm,scale=.25]{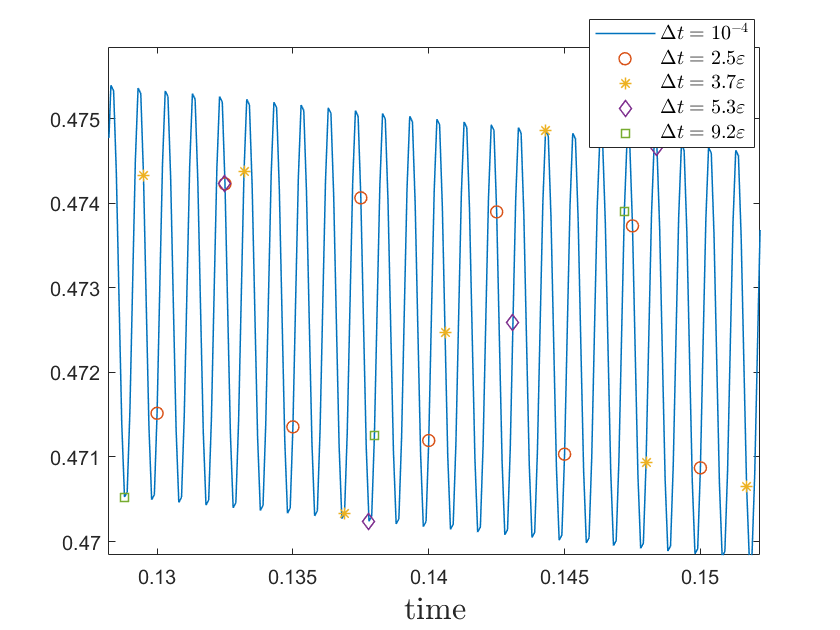}
				% '\put(-1.5,50){(a)}
				% \put(10.,43){\textsc{TestCos-Sin}, $\varepsilon = 10^{-3}$}
				% \end{overpic}
			% 	\end{minipage}\hfill
		% 	\begin{minipage}
			% 		{.45\textwidth}	\centering
			% %\includegraphics[width=\textwidth]{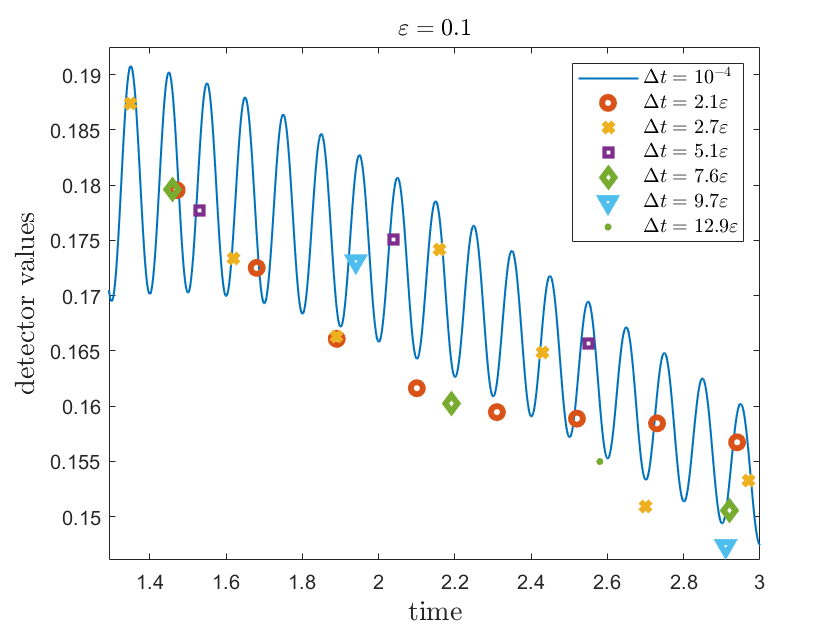}
			% \begin{overpic}[abs,width=\textwidth,unit=1mm,scale=.25]{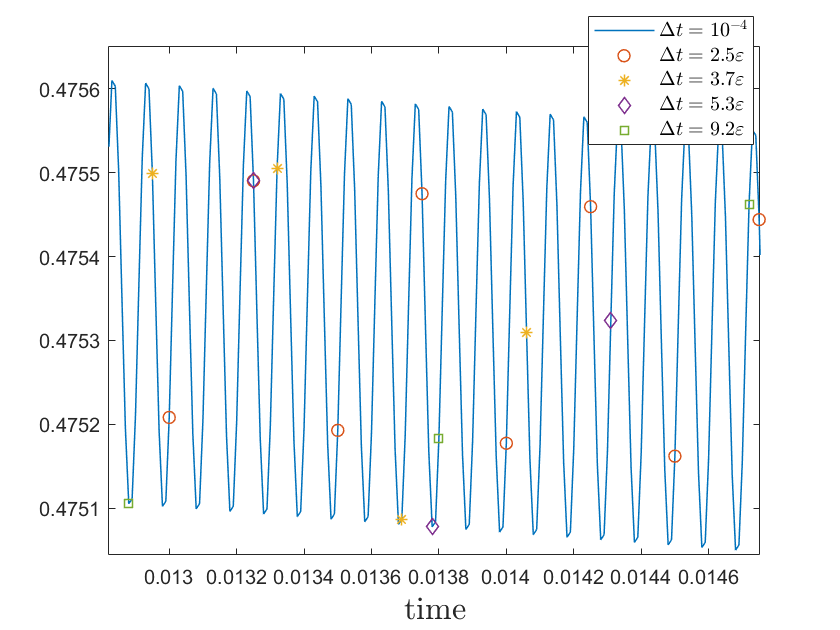}
				% '\put(-1.5,50){(b)}
				% \put(10.,43){\textsc{TestCos-Sin}, $\varepsilon = 10^{-4}$}
				% \end{overpic}
			% 	\end{minipage}
		% 	\hspace*{\fill}
		% 	\caption{\textit{Detector values of the concentration for different values of $\Delta t$ with $\varepsilon=0.01$ (a) and $\varepsilon = 0.001$ in (b). The initial condition is defined in Eq.~\eqref{equation_IC} and the detector is centered at $P=(0,-0.5)$, $P\in\Omega$. The parameters of the test are: $N = 160, A = 20, a_1 = 0.6, a_2 = 0.4, \delta = 10^{-3}, y_0 = -0.75$, and $D = 0.01$.}}
		% 	\label{fig_detector_cos_sin}
		% \end{figure}
	
	\begin{figure}[htp]
		\centering
		\hfill
		\begin{minipage}[b]
			{.32\textwidth}
			\centering
			\begin{overpic}[abs,width=\textwidth,unit=1mm,scale=.25]{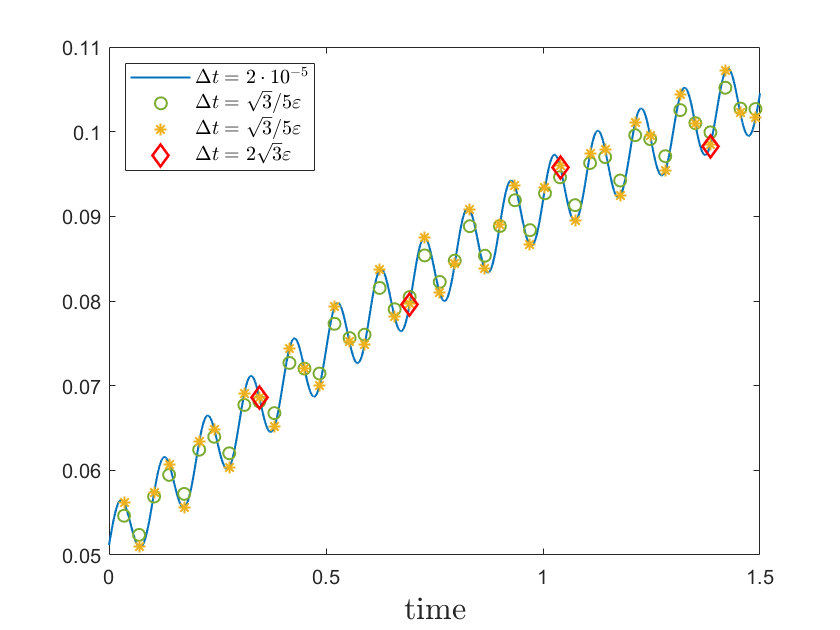}
				\put(-1.5,35){(a) \textsc{TestCos}, $\varepsilon = 10^{-1}$}  
			\end{overpic}
		\end{minipage}\hfill
		\begin{minipage}[b]
			{.32\textwidth}
			\centering
			\begin{overpic}[abs,width=\textwidth,unit=1mm,scale=.25]{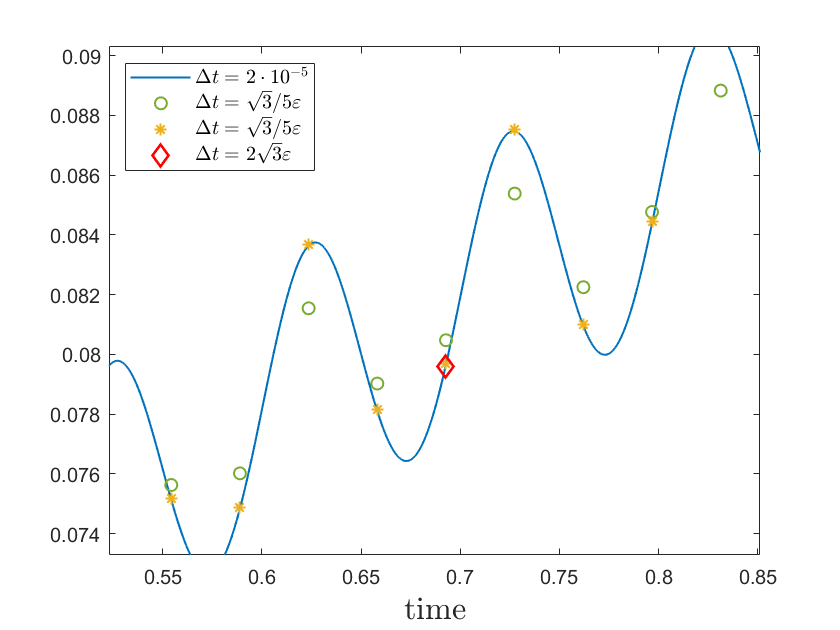}
				\put(-1.5,35){(b) \textsc{TestCos}, $\varepsilon = 10^{-1}$}  
			\end{overpic}
		\end{minipage}\hfill
		\begin{minipage}[b]
			{.32\textwidth}
			\centering
			\begin{overpic}[abs,width=\textwidth,unit=1mm,scale=.25]{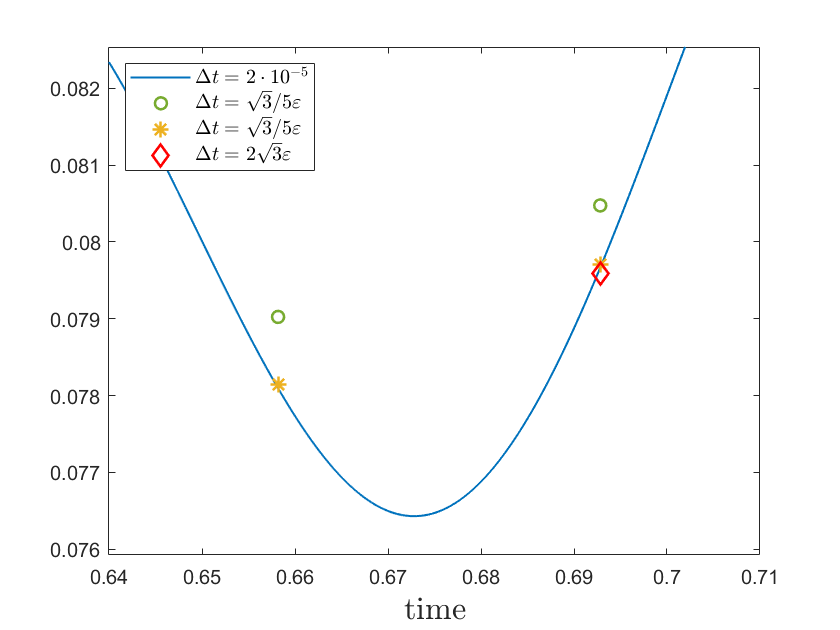}
				\put(-1.5,35){(c) \textsc{TestCos}, $\varepsilon = 10^{-1}$}  
			\end{overpic}
		\end{minipage}\hfill
		\caption{\textit{(a): Comparison of two different, second order accurate in time integrators for Eq.~\eqref{pde2d} for \textsc{TestCos}, together with a reference solution (blue line): integral-type scheme in Eq.~\eqref{eq_2nd_order_method}  (yellow stars and red diamonds) and Crank-Nicolson scheme (CN) (green circles). (b): Zoom--in of panel (a). (c): Zoom--in of panel (b). The parameters of the system are: $\varepsilon = 10^{-1}, N = 160, A = 1, D = 0.02, \delta = 10^{-1}, y_0 = -0.75$. }}
		\label{fig_CN_01}
	\end{figure}

	\begin{figure}[htp]
		\centering
		\hfill
		\begin{minipage}[b]
			{.32\textwidth}
			\centering
			\begin{overpic}[abs,width=\textwidth,unit=1mm,scale=.25]{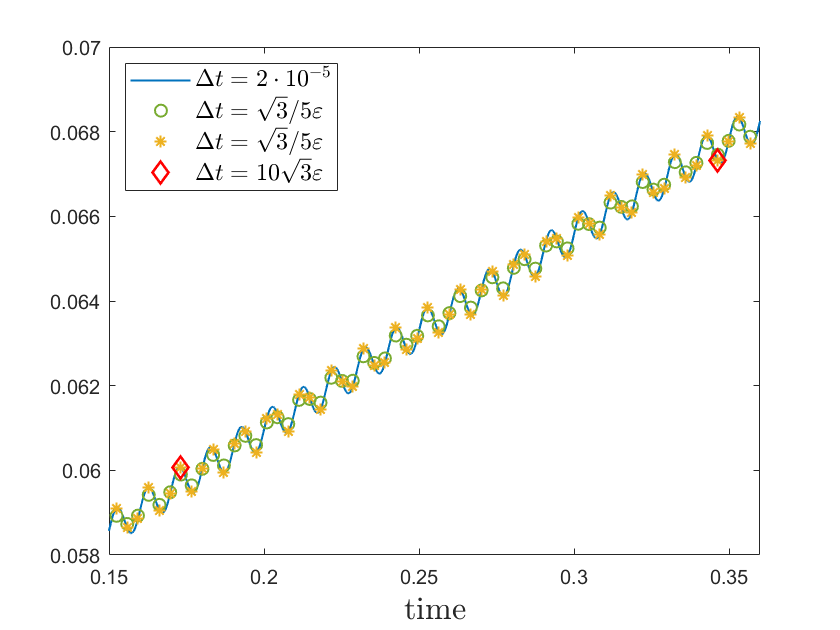}
				\put(-1.5,35){(a) \textsc{TestCos}, $\varepsilon = 10^{-2}$}  
			\end{overpic}
		\end{minipage}\hfill
		\begin{minipage}[b]
			{.32\textwidth}
			\centering
			\begin{overpic}[abs,width=\textwidth,unit=1mm,scale=.25]{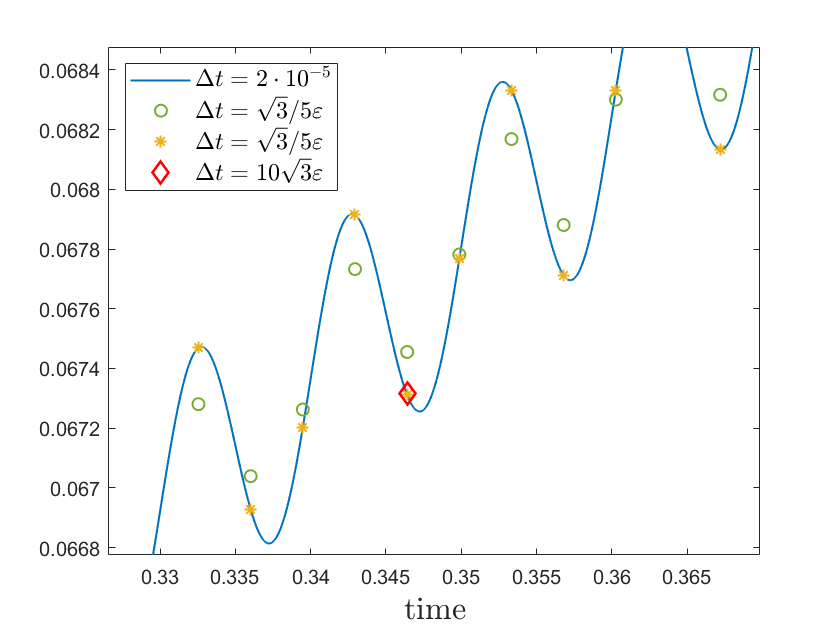}
				\put(-1.5,35){(b) \textsc{TestCos}, $\varepsilon = 10^{-2}$}  
			\end{overpic}
		\end{minipage}\hfill
		\begin{minipage}[b]
			{.32\textwidth}
			\centering
			\begin{overpic}[abs,width=\textwidth,unit=1mm,scale=.25]{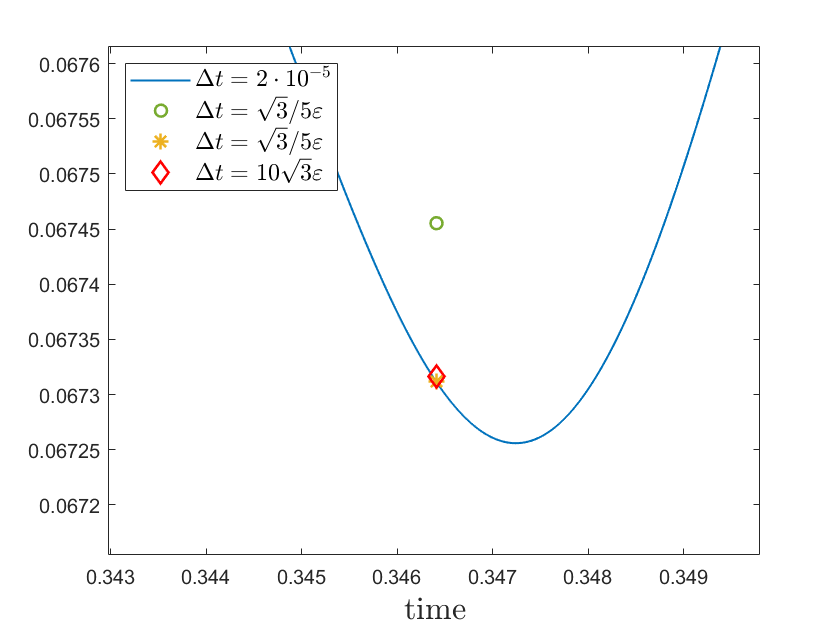}
				\put(-1.5,35){(c) \textsc{TestCos}, $\varepsilon = 10^{-2}$}  
			\end{overpic}
		\end{minipage}\hfill
		\caption{\textit{(a): Comparison of two different, second order accurate in time integrators for Eq.~\eqref{pde2d} for \textsc{TestCos}, together with a reference solution (blue line): integral-type scheme in Eq.~\eqref{eq_2nd_order_method}  (yellow stars and red diamonds) and Crank-Nicolson scheme (CN) (green circles). (b): Zoom--in of panel (a). (c): Zoom--in of panel (b). The parameters of the system are: $\varepsilon = 10^{-2}, N = 160, A = 1, D = 0.02, \delta = 10^{-2}, y_0 = -0.75$. }}
		\label{fig_CN_001}
	\end{figure}
	
	\begin{figure}
		\centering
		\includegraphics[width=0.5\textwidth]{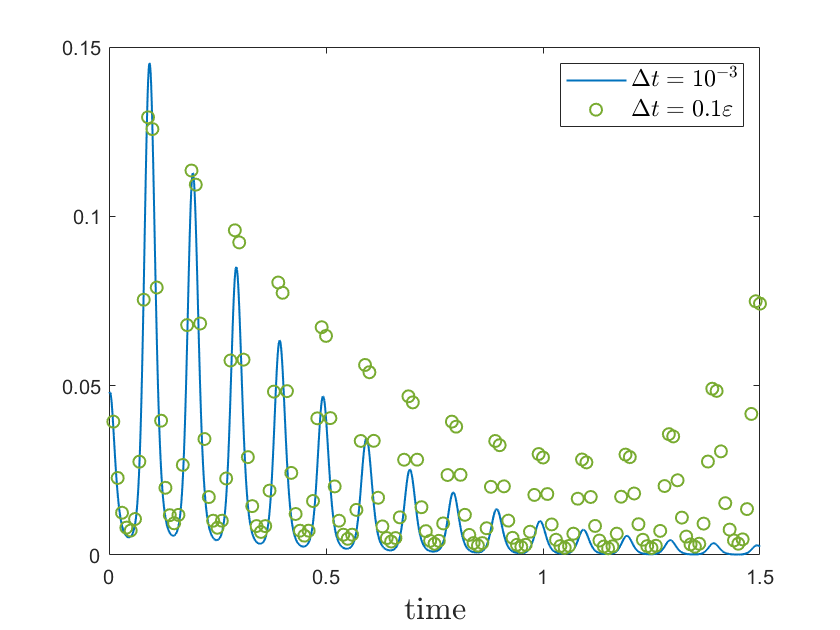}
		\caption{\textit{Detector values of the concentration $c_{\varepsilon,h}$ for \textsc{TestOsc}, and different time steps $\Delta t$. The initial condition is defined in Eq.~\eqref{equation_IC} and the detector is centered at $P=(0,-0.5)$, $P\in\Omega$. Parameters of the numerical test: $N = 200, \varepsilon = 0.1, A = 1, y_0 = -1, \Delta t = 10^{-3}, D = 0.1$ and $\delta = 10^{-3}$.}}
		\label{fig_space_osc}
	\end{figure}
	
	\section{Conclusions}
	In this work, we have presented a general strategy for constructing uniformly accurate numerical schemes for highly oscillatory advection diffusion equations, motivated by the necessity of solving the multiscale problem developed in \cite{astuto2023multiscale} to treat the diffusion and trapping or a surfactant around a rapidly oscillatory bubble (see also  \cite{CiCP-31-707}, \cite{COCO2020109623}, \cite{ASTUTO2023111880}). 
	
	Traditional numerical time integrators, that are commonly employed, require significant time restrictions to achieve accurate solutions when the oscillation period is much smaller than the diffusion time, since the time step has to be smaller than the oscillation period. 
	
	Here we demonstrate that there are alternative choices available to achieve the desired accuracy without resolving the short oscillation period, thus making the computational cost orders of magnitude smaller that the one required by methods from the literature. 
	
	The accuracy of the proposed method actually improves when the oscillation frequency increases, providing accurate solutions with time steps much larger then the oscillation period. 
	
	Here we assume that the fluid velocity is a known function of space and time. 
	A natural extension of the current work 
	is the numerical solution of a coupled Stokes-advection-diffusion system, where the expression for the velocity in the advection term is obtained numerically by solving the Stokes equations.


\begin{thebibliography}{99}
	
	\bibitem{ASTUTO2023111880}
	Clarissa Astuto, Armando Coco, Giovanni Russo,
	\textit{A finite-difference ghost-point multigrid method for multi-scale modelling of sorption kinetics of a surfactant past an oscillating bubble},
	\textit{Journal of Computational Physics}, \textbf{476}, 111880 (2023),
	ISSN: 0021-9991,
	DOI: \url{https://doi.org/10.1016/j.jcp.2022.111880},
	URL: \url{https://www.sciencedirect.com/science/article/pii/S0021999122009433}.
	
	\bibitem{astuto2023multiscale}
	Clarissa Astuto, Antonio Raudino, Giovanni Russo,
	\textit{Multiscale modeling of sorption kinetics},
	\textit{Multiscale Modeling \& Simulation}, \textbf{21}(1), 374--399 (2023),
	Publisher: SIAM,
	DOI: \url{https://doi.org/10.1137/19M1269372}.
	
	\bibitem{Benichou}
	O. B\'enichou, C. Loverdo, M. Moreau, R. Voituriez,
	\textit{Intermittent search strategies},
	\textit{Rev. Mod. Phys.}, \textbf{83}(1), 81--129 (2011),
	Publisher: American Physical Society,
	DOI: \url{10.1103/RevModPhys.83.81},
	URL: \url{https://link.aps.org/doi/10.1103/RevModPhys.83.81}.
	
	\bibitem{WIEGEL1983283}
	F.W Wiegel,
	\textit{Diffusion and the physics of chemoreception},
	\textit{Physics Reports}, \textbf{95}(5), 283-319 (1983),
	DOI: \url{https://doi.org/10.1016/0370-1573(83)90078-9}.
	
	\bibitem{chartier2015uniformly}
	Philippe Chartier, Nicolas Crouseilles, Mohammed Lemou, Florian M{\'e}hats,
	\textit{Uniformly accurate numerical schemes for highly oscillatory Klein--Gordon and nonlinear Schr{\''o}dinger equations},
	\textit{Numerische Mathematik}, \textbf{129}, 211--250 (2015),
	Publisher: Springer.
	
	\bibitem{chartier2020new}
	Philippe Chartier, Mohammed Lemou, Florian M{\'e}hats, Gilles Vilmart,
	\textit{A new class of uniformly accurate numerical schemes for highly oscillatory evolution equations},
	\textit{Foundations of Computational Mathematics}, \textbf{20}, 1--33 (2020),
	Publisher: Springer.
	
	\bibitem{chartier2022derivative}
	Philippe Chartier, Mohammed Lemou, Florian M{\'e}hats, Xiaofei Zhao,
	\textit{Derivative-free high-order uniformly accurate schemes for highly oscillatory systems},
	\textit{IMA Journal of Numerical Analysis}, \textbf{42}(2), 1623--1644 (2022),
	Publisher: Oxford University Press.
	
	\bibitem{COCO2013464}
	Armando Coco, Giovanni Russo,
	\textit{Finite-difference ghost-point multigrid methods on Cartesian grids for elliptic problems in arbitrary domains},
	\textit{Journal of Computational Physics}, \textbf{241}, 464--501 (2013),
	ISSN: 0021-9991,
	DOI: \url{https://doi.org/10.1016/j.jcp.2012.11.047},
	URL: \url{http://www.sciencedirect.com/science/article/pii/S0021999112007292}.
	
	\bibitem{COCO2020109623}
	Armando Coco,
	\textit{A multigrid ghost-point level-set method for incompressible Navier-Stokes equations on moving domains with curved boundaries},
	\textit{Journal of Computational Physics}, \textbf{418}, 109623 (2020),
	ISSN: 0021-9991,
	DOI: \url{https://doi.org/10.1016/j.jcp.2020.109623},
	URL: \url{https://www.sciencedirect.com/science/article/pii/S0021999120303971}.
	
	\bibitem{COCO2018299}
	Armando Coco, Giovanni Russo,
	\textit{Second order finite-difference ghost-point multigrid methods for elliptic problems with discontinuous coefficients on an arbitrary interface},
	\textit{Journal of Computational Physics}, \textbf{361}, 299--330 (2018),
	ISSN: 0021-9991,
	DOI: \url{https://doi.org/10.1016/j.jcp.2018.01.016},
	URL: \url{http://www.sciencedirect.com/science/article/pii/S0021999118300263}.
	
	\bibitem{variRaudinoD}
	M. Corti, M. Pannuzzo, A. Raudino,
	\textit{Out of Equilibrium Divergence of Dissipation in an Oscillating Bubble Coated by Surfactants},
	\textit{Langmuir}, \textbf{2014},
	DOI: \url{10.1021/la4040062}.
	
	\bibitem{crouseilles2013asymptotic}
	Nicolas Crouseilles, Mohammed Lemou, Florian M{\'e}hats,
	\textit{Asymptotic preserving schemes for highly oscillatory Vlasov--Poisson equations},
	\textit{Journal of Computational Physics}, \textbf{248}, 287--308 (2013),
	Publisher: Elsevier.
	
	\bibitem{crouseilles2017uniformly}
	Nicolas Crouseilles, Mohammed Lemou, Florian M{\'e}hats, Xiaofei Zhao,
	\textit{Uniformly accurate forward semi-Lagrangian methods for highly oscillatory Vlasov--Poisson equations},
	\textit{Multiscale Modeling \& Simulation}, \textbf{15}(2), 723--744 (2017),
	Publisher: SIAM.
	
	\bibitem{SIAM1_fernandez2016existence}
	JR Fern\'{a}ndez, Piotr Kalita, Stanislaw Migo\'{o}rski, M Carmen Mu{\~n}iz, C N\'{u}{\~n}ez,
	\textit{Existence and uniqueness results for a kinetic model in bulk-surface surfactant dynamics},
	\textit{SIAM Journal on Mathematical Analysis}, \textbf{48}(5), 3065--3089 (2016),
	Publisher: SIAM.
	
	\bibitem{fernandez2012mixed}
	JR Fern{\'a}ndez, MC Mu{\~n}iz, C N{\'u}{\~n}ez,
	\textit{A mixed kinetic-diffusion surfactant model for the Henry isotherm},
	\textit{Journal of Mathematical Analysis and Applications}, \textbf{389}(1), 670--684 (2012),
	Publisher: Elsevier.
	
	\bibitem{variC}
	Alan Gabel, Satya N Majumdar, Nagendra K Panduranga, S Redner,
	\textit{Can a lamb reach a haven before being eaten by diffusing lions?},
	\textit{Journal of Statistical Mechanics: Theory and Experiment}, \textbf{2012}(05), P05011 (2012),
	Publisher: IOP Publishing,
	DOI: \url{https://doi.org/10.1088/1742-5468/2012/05/p05011}.
	
	\bibitem{SIAM2_ganesan2012arbitrary}
	Sashikumaar Ganesan, Lutz Tobiska,
	\textit{Arbitrary Lagrangian--Eulerian finite-element method for computation of two-phase flows with soluble surfactants},
	\textit{Journal of Computational Physics}, \textbf{231}(9), 3685--3702 (2012),
	Publisher: Elsevier.
	
	\bibitem{SIAM3_morgan2015mathematical}
	CE Morgan, CJW Breward, Ian M Griffiths, Peter D Howell,
	\textit{Mathematical modelling of surfactant self-assembly at interfaces},
	\textit{SIAM Journal on Applied Mathematics}, \textbf{75}(2), 836--860 (2015),
	Publisher: SIAM.
	
	\bibitem{variB}
	G. Oshanin, O. B\'enichou, M. Coppey, M. Moreau,
	\textit{Trapping reactions with randomly moving traps: Exact asymptotic results for compact exploration},
	\textit{Phys. Rev. E}, \textbf{66}, 060101 (2002),
	Publisher: American Physical Society,
	DOI: \url{https://doi.org/10.1103/PhysRevE.66.060101}.
	
	\bibitem{variA}
	G. Oshanin, O. Vasilyev, P. L. Krapivsky, J. Klafter,
	\textit{Survival of an evasive prey},
	\textit{Proceedings of the National Academy of Sciences}, \textbf{106}(33), 13696-13701 (2009),
	Publisher: National Academy of Sciences,
	DOI: \url{https://doi.org/10.1073/pnas.0904354106}.
	
	\bibitem{sussman1994level}
	Mark Sussman, Peter Smereka, Stanley Osher,
	\textit{A level set approach for computing solutions to incompressible two-phase flow},
	\textit{Journal of Computational physics}, \textbf{114}(1), 146--159 (1994),
	Publisher: Elsevier.
	
	\bibitem{pierresinserm00416601}
	Anne Pierres, Virginie Monnet-Corti, Anne-Marie Benoliel, Pierre Bongrand,
	\textit{Do membrane undulations help cells probe the world?},
	\textit{Trends in Cell Biology}, \textbf{19}(10), 428--434 (2009),
	Publisher: Elsevier,
	DOI: \url{https://doi.org/10.1016/j.tcb.2009.05.009}.
	
	\bibitem{https://doi.org/10.2307/1940005}
	Alan A. Berryman,
	\textit{The Origins and Evolution of Predator-Prey Theory},
	\textit{Ecology}, \textbf{73}(5), 1530-1535 (1992),
	Publisher: Ecological Society of America,
	DOI: \url{https://doi.org/10.2307/1940005}
	
	\bibitem{CiCP-31-707}
	Antonio Raudino, Antonio Grassi, Giuseppe Lombardo, Giovanni Russo, Clarissa Astuto, Mario Corti,
	\textit{Anomalous Sorption Kinetics of Self-Interacting Particles by a Spherical Trap},
	\textit{Communications in Computational Physics}, \textbf{31}(3), 707--738 (2022),
	Publisher: Global Science Press,
	DOI: \url{https://doi.org/10.4208/cicp.OA-2021-0101}
	
	\bibitem{Raudino20168574}
	A. Raudino, D. Raciti, A. Grassi, M. Pannuzzo, M. Corti,
	\textit{Oscillations of Bubble Shape Cause Anomalous Surfactant Diffusion: Experiments, Theory, and Simulations},
	\textit{Langmuir}, \textbf{32}(44), 1168574--1168583 (2016),
	DOI: \url{10.1021/acs.langmuir.6b02054}.
	
	\bibitem{SIAM4_xu2013analytical}
	Kuan Xu, MR Booty, Michael Siegel,
	\textit{Analytical and computational methods for two-phase flow with soluble surfactant},
	\textit{SIAM Journal on Applied Mathematics}, \textbf{73}(1), 523--548 (2013), SIAM.
	
	\bibitem{Lotka-Volterra}
	M. H. Rahmani Doust and S. GHolizade,
	\emph{The Lotka-Volterra Predator-Prey Equations},
	\emph{Caspian Journal of Mathematical Sciences (CJMS)},
	\textbf{3}(2), 221--225, 2014,
	University of Mazandaran,
	DOI: \url{https://doi.org/your_doi_here}
	
	\bibitem{corti2014out}
	Mario Corti, Martina Pannuzzo, and Antonio Raudino,
	\emph{Out of equilibrium divergence of dissipation in an oscillating bubble coated by surfactants},
	\emph{Langmuir},
	\textbf{30}(2), 477--487, 2014,
	ACS Publications
	
	\bibitem{corti2015trapping}
	Mario Corti, Martina Pannuzzo, and Antonio Raudino,
	\emph{Trapping of sodium dodecyl sulfate at the air--water interface of oscillating bubbles},
	\emph{Langmuir},
	\textbf{31}(23), 6277--6281, 2015,
	ACS Publications
	
	\bibitem{BERG1977193}
	H.C. Berg and E.M. Purcell,
	\emph{Physics of chemoreception},
	\emph{Biophysical Journal},
	1977,
	DOI: \url{https://doi.org/10.1016/0370-1573(83)90078-9}
	
	\bibitem{Osher}
	Stanley Osher and Ronald Fedkiw,
	\emph{Level set methods and dynamic implicit surfaces},
	\emph{Applied Mathematical Sciences},
	\textbf{153}, Springer-Verlag, New York, 2003,
	DOI: \url{https://doi.org/10.1007/b98879}
	
	\bibitem{russo2000remark}
	Giovanni Russo and Peter Smereka,
	\emph{A remark on computing distance functions},
	\emph{Journal of Computational Physics},
	\textbf{163}(1), 51--67, 2000,
	Elsevier
	
	\bibitem{book:72748}
	J. A. Sethian,
	\emph{Level set methods and fast marching methods: evolving interfaces in computational geometry, fluid mechanics, computer vision, and materials science},
	\emph{Cambridge University Press},
	2nd ed., Cambridge monographs on applied and computational mathematics 3, 1999,
	\url{http://gen.lib.rus.ec/book/index.php?md5=84883b1f633449f85363f4df6e4e33f2}
	
	
\end{thebibliography}
\end{document}